\newcommand{\RR}{\mathbb{R}}
\newcommand{\Sb}{\mathbb{S}}
\newcommand{\rd}{\mathrm{d}}
\newcommand{\fd}{\delta^+}
\newcommand{\fa}{\mu^+}
\newcommand{\ba}{\mu^-}
\newcommand{\bd}{\delta^-}
\newcommand{\cd}[1][1]{\delta^{\langle #1 \rangle}}
\newcommand{\Dc}[1][1]{D^{\langle #1 \rangle}}
\newcommand{\di}{d}
\newtheorem{prop}{Proposition}
\newtheorem{remark}{Remark}
\newtheorem{theorem}{Theorem}
\newtheorem{definition}{Definition} 
\numberwithin{equation}{section}
\DeclareMathAlphabet{\mathcal}{OMS}{cmsy}{m}{n}
\title{Linearly implicit structure-preserving schemes for Hamiltonian systems}
\author{Sølve Eidnes$^{1}$, Lu Li$^{1}$, Shun Sato$^{2}$}
\date{%
\today\\
\vspace{22pt}
    $^1$Department of Mathematical Sciences, NTNU, 7491 Trondheim, Norway.\\E-mail: \texttt{solve.eidnes@ntnu.no}, \texttt{lu.li@ntnu.no} (corresponding author).\\%
    $^2$Graduate School of Information Science and Technology, The University of Tokyo, Bunkyo-ku, Tokyo, Japan.\\E-mail: \texttt{shun{\_}sato@mist.i.u-tokyo.ac.jp}.
}
\begin{document}
\maketitle

\begin{abstract}
Kahan's method and a two-step generalisation of the discrete gradient method are both linearly implicit methods that can preserve a modified energy for Hamiltonian systems with a cubic Hamiltonian. These methods are here investigated and compared. The schemes are applied to the Korteweg--de Vries equation and the Camassa--Holm equation, and the numerical results are presented and analysed.

\textbf{Keywords}: Linearly implicit methods, Hamiltonian system, energy preservation, Camassa--Holm equation, Korteweg--de Vries equation.
\end{abstract}

\section{Introduction}

The field of geometric numerical integration has garnered increased attention over the last three decades. It considers the design and analysis of numerical methods that can capture geometric properties of the flow of the differential equation to be modelled. These geometric properties are mainly invariants over time; they are conserved quantities such as Hamiltonian energy, angular momentum, volume or symplecticity. Among them the conservation of energy is particularly important for proving the existence and uniqueness of solutions for partial differential equations (PDEs) \cite{taylor115partial}.
 Numerical schemes inheriting such properties from the continuous dynamical system have been shown in many cases to be advantageous, especially when integration over long time intervals is considered \cite{hairer2006geometric}. 

For general non-linear differential equations, one may use a standard fully implicit scheme to solve a problem numerically. Then a non-linear system of equations must be solved at each time step. Typically this is done by the use of an iterative solver where a linear system is to be solved at each iteration. This quickly becomes a computationally expensive procedure, especially since the number of iterations needed in general increases with the size of the system; see a numerical example comparing the computational cost for implicit and linearly implicit methods in \cite{dahlby2011general}. A fully explicit method on the other hand, may over-simplify the problem and lead to the loss of important information, and will often have inferior stability properties. The golden middle way may be found in linearly implicit schemes, i.e. schemes where the non-linear terms are discretized such that the solution at the next time step is found from solving one linear system.

Our aim is to present and analyse linearly implicit schemes with preservation properties. We consider ordinary differential equations (ODEs) that can be written in the form
\begin{equation}\label{original eq}
\begin{split}
\dot{x}&=f(x)=S\nabla H(x),\quad x\in \mathbb{R}^\di,\\
x(0)&=x_{0},
\end{split}
\end{equation} 
where $S$ is a constant skew-symmetric matrix and $H$ is a cubic Hamiltonian function. The famous geometric characteristic for equations like \eqref{original eq} is that the exact flow is energy-preserving,
\begin{equation*}
\frac{d}{dt}H(x)=\nabla H(x)^T\frac{dx}{dt}=\nabla H(x)^TS\nabla H(x)=0,
\end{equation*}
and symplectic if $S$ is the canonical skew-symmetric matrix:
\begin{equation}
\label{symplecticity}
\Psi_{y_0}(t)^T\, S \, \Psi_{y_0}(t) = S,
\end{equation}
where $\Psi_{y_0}(t):=\frac{\partial \varphi_t(y_0)}{\partial y_0}$, with 
$\varphi_t:\mathbb{R}^{\di}\rightarrow \mathbb{R}^{\di}$, $\varphi_t(y_0)=y(t)$
 the flow map of \eqref{original eq} \cite{hairer2006geometric}.
 A numerical one-step method is said to be energy-preserving if $H$ is constant along the numerical solution, and symplectic if the numerical flow map is symplectic. Both the energy-preserving methods and the symplectic methods, the latter of which has the ability to preserve a perturbation of the Hamiltonian $H$ of \eqref{original eq}, have their own advantages. In particular, the energy-preserving property has been found to be crucial in the proof of stability for several such numerical methods, see e.g \cite{fei1995numerical}. However, there is no numerical integration method that can be simultaneously symplectic
and energy-preserving for general Hamiltonian systems \cite{ge88lph}.  In this paper we will focus on energy-preserving numerical integration.

We wish to study and compare two types of existing methods with geometric properties.
The first one is Kahan's method for quadratic ODE vector fields \cite{kahan1993unconventional}, which by construction is linearly implicit, and for which the geometric properties have been studied in \cite{celledoni2012geometric}. Kahan's method has not been extensively studied for solving PDEs so far, with the notable exception \cite{kahan1997unconventional}. This is a one-step method, but we will also give its formulation as a two-step method in this paper, for easier comparison to the other method to be studied. That method, which we call the polarised discrete gradient (PDG) method, is based on the multiple points discrete variational derivative method for PDEs presented by Furihata, Matsuo and coauthors in the papers \cite{matsuo2001dissipative, matsuo2002spatially,matsuo2007new} and the monograph \cite{furihata2011discrete}. A more general framework for such schemes is given by Dahlby and Owren in \cite{dahlby2011general}. With the aim of easing the comparison to Kahan's method, we present here the two-step method of \cite{furihata2011discrete, dahlby2011general} as it looks for ODEs of the form \eqref{original eq}. When Hamiltonian PDEs are considered, by semi-discretizing in space to obtain a system of Hamiltonian ODEs and then applying the PDG method, one may obtain the schemes of the aforementioned references; a specific scheme will depend on the choice of spatial discretization as well as the choices of some functions to be explained in the next section: the polarised energy and the polarised discrete gradient.

This paper is divided into two main parts. In the next chapter, we present the methods in consideration, and give some theoretical results on their geometric properties. In Chapter 3, we present numerical results for the Camassa--Holm equation and the Korteweg--de Vries equation, including analysis of stability and dispersion, comparing the methods.

\section{Linearly implicit schemes}\label{Method introduction}

We will present an ODE formulation of the linearly implicit schemes presented by Furihata, Matsuo and coauthors in \cite{matsuo2001dissipative, matsuo2002spatially, matsuo2007new, furihata2011discrete} and by Dahlby and Owren in \cite{dahlby2011general}. Inspired by the nomenclature of the latter reference, we call these schemes polarised discrete gradient methods. Then we present a special case of this polarisation method in the same framework as Kahan's method, with the goal of obtaining more clarity in comparison of the methods.

\subsection{Polarised discrete gradient methods}

The idea behind the PDG methods is to generalise the discrete gradient method in such a way that a relaxed variant of the preservation property is intact, while nonlinear terms are discretized over consecutive time steps to ensure linearity in the scheme.
Let us first recall the concept of discrete gradient methods. A discrete gradient is a continuous map $\overline{\nabla}H : \mathbb{R}^\di\times \mathbb{R}^\di\rightarrow \mathbb{R}^\di$ such that for any $x, y \in \mathbb{R}^\di$
\begin{equation*}
H(y)-H(x)= (y-x)^T\overline{\nabla}H(x,y).
\end{equation*}
The discrete gradient method for \eqref{original eq} is then given by
\begin{equation*}
\frac{x^{n+1}-x^n}{\Delta t}=S\overline{\nabla}H(x^n,x^{n+1}),
\end{equation*} 
which will preserve the energy of the system \eqref{original eq} at any time step. Here and in what follows, $x^n$ is the numerical approximation of $x$ at $t=t_n$ and $x_k^n$ is the numerical approximation of the $k$th component of $x$ at $t=t_n$.

Restricting ourselves to two-step methods, we define the PDG methods as follows.

\begin{definition}
For the energy $H$ of \eqref{original eq}, consider the polarised energy as a function $\tilde{H} : \mathbb{R}^\di \times \mathbb{R}^\di \rightarrow \mathbb{R}$ satisfying the properties
\begin{align*}
\tilde{H}(x,x) &= H(x),\\
\tilde{H}(x,y) &= \tilde{H}(y,x).
\end{align*}
A polarised discrete gradient (PDG) for $\tilde{H}$ is a function $\overline{\nabla}\tilde{H} : \mathbb{R}^\di \times \mathbb{R}^\di \times \mathbb{R}^\di \rightarrow \mathbb{R}^\di$ satisfying
\begin{align}
\tilde{H}(y,z) - \tilde{H}(x,y) &= 
\frac{1}{2}(z-x)^T \overline{\nabla}\tilde{H}(x,y,z), \label{eq:dg1}\\
\overline{\nabla}\tilde{H}(x,x,x)&= \nabla H(x), \nonumber
\end{align}
and the corresponding polarised discrete gradient scheme is given by
\begin{equation}\label{eq:polscheme}
\frac{x^{n+2}-x^{n}}{2 \Delta t} = S \overline{\nabla}\tilde{H}(x^n,x^{n+1},x^{n+2}).
\end{equation}
\end{definition}

\begin{prop}
The numerical scheme \eqref{eq:polscheme} preserves the polarised invariant $\tilde{H}$ in the sense that $\tilde{H}(x^{n},x^{n+1}) = \tilde{H}(x^0,x^1)$ for all $n\geq 0$.
\end{prop}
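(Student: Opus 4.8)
The plan is to establish the single-step identity $\tilde{H}(x^{n+1},x^{n+2}) = \tilde{H}(x^{n},x^{n+1})$ for every $n\ge 0$ and then conclude $\tilde{H}(x^{n},x^{n+1}) = \tilde{H}(x^{0},x^{1})$ by a trivial induction on $n$. This is the discrete counterpart of the continuous computation $\frac{d}{dt}H(x) = \nabla H(x)^{T}S\nabla H(x) = 0$ recalled in the introduction: the role of the chain rule is played by the defining relation \eqref{eq:dg1} of the polarised discrete gradient, and the role of the skew-symmetry of $S$ is unchanged.

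To prove the single-step identity, I would apply \eqref{eq:dg1} with the substitution $(x,y,z) = (x^{n},x^{n+1},x^{n+2})$, giving
\begin{equation*}
\tilde{H}(x^{n+1},x^{n+2}) - \tilde{H}(x^{n},x^{n+1}) = \tfrac{1}{2}(x^{n+2}-x^{n})^{T}\,\overline{\nabla}\tilde{H}(x^{n},x^{n+1},x^{n+2}).
\end{equation*}
Then I would insert the scheme \eqref{eq:polscheme} in the form $x^{n+2}-x^{n} = 2\Delta t\, S\,\overline{\nabla}\tilde{H}(x^{n},x^{n+1},x^{n+2})$ into the right-hand side, obtaining $\Delta t\,\overline{\nabla}\tilde{H}^{T}S^{T}\overline{\nabla}\tilde{H}$, where $\overline{\nabla}\tilde{H}$ abbreviates the common value $\overline{\nabla}\tilde{H}(x^{n},x^{n+1},x^{n+2})$. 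Since $S$ is skew-symmetric, $w^{T}Sw = 0$ for every $w\in\mathbb{R}^{\di}$, so this quantity vanishes and the single-step identity follows.

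Finally, a straightforward induction closes the argument: the case $n=0$ is trivial, and if $\tilde{H}(x^{n},x^{n+1}) = \tilde{H}(x^{0},x^{1})$ then the single-step identity gives $\tilde{H}(x^{n+1},x^{n+2}) = \tilde{H}(x^{n},x^{n+1}) = \tilde{H}(x^{0},x^{1})$. I do not anticipate any real obstacle; the only subtlety worth noting is that the argument uses neither the symmetry property $\tilde{H}(x,y)=\tilde{H}(y,x)$ nor the consistency condition $\overline{\nabla}\tilde{H}(x,x,x) = \nabla H(x)$ — those are needed for order and for relating $\tilde{H}$ back to $H$, not for the exact preservation of the polarised invariant — so I would phrase the proof to invoke only \eqref{eq:dg1}, \eqref{eq:polscheme}, and the skew-symmetry of $S$.
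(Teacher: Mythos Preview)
Your proposal is correct and follows essentially the same route as the paper: apply \eqref{eq:dg1}, substitute the scheme \eqref{eq:polscheme}, and use the skew-symmetry of $S$ to conclude that $\tilde{H}(x^{n+1},x^{n+2})-\tilde{H}(x^{n},x^{n+1})=0$. The paper's proof is just the three-line chain of equalities you describe, without the extra commentary on which axioms are actually invoked or the explicit induction step.
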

\begin{proof}
\begin{align*}
\tilde{H}(x^{n+1},x^{n+2})-\tilde{H}(x^n,x^{n+1})
&= \frac{1}{2}(x^{n+2}-x^n)^T \overline{\nabla}\tilde{H}(x^n,x^{n+1},x^{n+2}) \\
&= \Delta t \overline{\nabla}\tilde{H}(x^n,x^{n+1},x^{n+2})^T S \overline{\nabla}\tilde{H}(x^n,x^{n+1},x^{n+2}) \\
&= 0,
\end{align*}
where the last equality follows from the skew-symmetry of $S$.
\end{proof}
We remark here that in the cases where we seek a time-stepping scheme for the system of Hamiltonian ODEs resulting from discretizing a Hamiltonian PDE in space in an appropriate manner, e.g. as described in \cite{celledoni2012preserving}, $H$ will be a discrete approximation to an integral $\mathcal{H}$. Thus a two-step PDG method and a standard one-step discrete gradient method, the latter in general fully implicit, will preserve two different discrete approximations separately to the same $\mathcal{H}$.

The task of finding a PDG satisfying \eqref{eq:dg1} is approached differently in our two main references, \cite{matsuo2001dissipative, matsuo2002spatially, matsuo2007new, furihata2011discrete} and \cite{dahlby2011general}. Furihata, Matsuo and coauthors apply a generalisation of the approach introduced by Furihata in \cite{furihata1999finite} for finding discrete variational derivatives, while Dahlby and Owren suggest a generalisation of the average vector field (AVF) discrete gradient \cite{mclachlan1999geometric}, given by
\begin{equation*}
\overline{\nabla}_\text{AVF}\tilde{H}(x,y,z) = 2 \int_0^1 \nabla_x \tilde{H}(\xi x + (1-\xi)z,y) \, \mathrm{d}\xi,
\end{equation*}
where $\nabla_x \tilde{H}(x,y)$ is the gradient of $\tilde{H}(x,y)$ with respect to its first argument.
Provided that the spatial discretization is performed in the same way, these two approaches lead to the same scheme for an $\tilde{H}$ quadratic in each of its arguments, as does a generalisation of the midpoint discrete gradient of Gonzalez \cite{gonzalez1996time}. Based on this, we present the most straightforward approach for finding this specific PDG for the cases we are studying in this paper:

\begin{prop}\label{pdg}
Given an $\tilde{H}(x,y)$ that is at most quadratic in each of its arguments, define $\nabla_x \tilde{H}(x,y)$ as the gradient of $\tilde{H}$ with respect to its first argument. Then a PDG for $\tilde{H}$ is given by
\begin{equation}\label{eq:PDG}
\overline{\nabla}\tilde{H}(x,y,z) = 2 \nabla_x\tilde{H}(\frac{x+z}{2},y).
\end{equation}
\end{prop}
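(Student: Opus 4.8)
The plan is to verify the two defining properties of a PDG in \eqref{eq:dg1} directly, exploiting that $\tilde H$ is at most quadratic in its first slot. First I would fix the second argument $y$ and write $\tilde H(x,y) = \frac12 x^T A(y) x + b(y)^T x + c(y)$ with $A(y)$ symmetric, so that $\nabla_x \tilde H(x,y) = A(y)x + b(y)$. The core computation is the elementary ``midpoint'' identity for quadratic forms: for symmetric $A$ one has $z^T A z - x^T A x = (z-x)^T A (x+z)$, which follows by expanding and using $x^T A z = z^T A x$. Hence
\[
\tilde H(z,y) - \tilde H(x,y) = \tfrac12 (z-x)^T A(y)(x+z) + b(y)^T (z-x) = (z-x)^T\Big( A(y)\tfrac{x+z}{2} + b(y)\Big) = (z-x)^T \nabla_x \tilde H\Big(\tfrac{x+z}{2},y\Big).
\]

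Next I would invoke the symmetry $\tilde H(y,z) = \tilde H(z,y)$ to rewrite the left-hand side as $\tilde H(y,z) - \tilde H(x,y)$, turning the displayed equality into exactly \eqref{eq:dg1} with $\overline{\nabla}\tilde H(x,y,z) = 2\nabla_x\tilde H\big(\tfrac{x+z}{2},y\big)$; continuity of this map is immediate since $\tilde H$ is a polynomial. For the consistency condition $\overline{\nabla}\tilde H(x,x,x) = \nabla H(x)$, I would differentiate $H(x) = \tilde H(x,x)$ by the chain rule to get $\nabla H(x) = \nabla_x \tilde H(x,x) + \nabla_y \tilde H(x,x)$, and observe that differentiating the identity $\tilde H(x,y) = \tilde H(y,x)$ with respect to the second argument and then setting $y=x$ yields $\nabla_y\tilde H(x,x) = \nabla_x\tilde H(x,x)$; therefore $\nabla H(x) = 2\nabla_x\tilde H(x,x) = \overline{\nabla}\tilde H(x,x,x)$.

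There is no genuine obstacle here. The only points that need a little care are to use the hypothesis in the form ``at most quadratic'' so that the affine part $b(y)^T x + c(y)$ is permitted and no invertibility of $A(y)$ is required, and to keep the bookkeeping straight about whether a gradient is taken with respect to the first or the second argument when symmetry is applied. The quadratic-form identity used above is precisely the statement that the midpoint rule is an exact discrete gradient for quadratics, which is why — as remarked just before the proposition — this construction coincides in the present setting with the polarised Gonzalez and AVF discrete gradients.
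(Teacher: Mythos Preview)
Your proof is correct and follows essentially the same route as the paper's: write $\tilde H(\cdot,y)$ as a general quadratic $x^T A(y)x + b(y)^T x + c(y)$, use the midpoint identity for quadratic forms to verify \eqref{eq:dg1}, and check consistency. You are in fact slightly more careful than the paper, since you explicitly invoke the symmetry $\tilde H(z,y)=\tilde H(y,z)$ and justify $2\nabla_x\tilde H(x,x)=\nabla H(x)$ via the chain rule, whereas the paper simply asserts both steps.
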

\begin{proof}
We may write
\begin{equation*}
\tilde{H}(x,y) = x^T A(y) x + b(y)^T x + c(y),
\end{equation*}
for some symmetric $A : \mathbb{R}^\di \rightarrow \mathbb{R}^\di \times \mathbb{R}^\di$, $b : \mathbb{R}^\di \rightarrow \mathbb{R}^\di$ and $c : \mathbb{R}^\di \rightarrow \mathbb{R}$. Then
\begin{equation*}
\nabla_x \tilde{H}(x,y) = 2 A(y) x + b(y),
\end{equation*}
and
\begin{align*}
\nabla_x \tilde{H}(\frac{x+z}{2},y)^T (z-x) &= (2 A(y) \frac{x+z}{2} + b(y))^T (z-x) \\
&= z^T A(y) z +b(y)^T z - x^T A(y) x - b(y)^T x \\
&= \tilde{H}(y,z) - \tilde{H}(x,y).
\end{align*}
Furthermore,
\begin{equation*}
\overline{\nabla}\tilde{H}(x,x,x) = 2 \nabla_x\tilde{H}(x,x) = \nabla H(x).
\end{equation*}
\end{proof}
As remarked in Theorem $4.5$ of \cite{dahlby2011general}: if the polarised energy $\tilde{H}(x,y)$ is at most quadratic in each of its arguments, the scheme \eqref{eq:polscheme} with the PDG \eqref{eq:PDG} is linearly implicit.

An alternative to \eqref{eq:PDG} could be a generalisation of the Itoh--Abe discrete gradient \cite{itoh1988hamiltonian}, defined by its $i$-th component
\begin{align*}
\overline{\nabla}_\text{IA}\tilde{H}(x,y,z)_i  &= 2
  \begin{cases}
    \bar{\partial}\tilde{H}(x,y,z)_i       & \quad \text{if } x_i \neq z_i,\\
    \frac{\partial \tilde{H}}{\partial x_i}((z_1,\ldots,z_{i-1},x_{i},\ldots,x_\di),y)  & \quad \text{if } x_i = z_i,
  \end{cases}
\end{align*}
where
$$\bar{\partial}\tilde{H}(x,y,z)_i  = \dfrac{\tilde{H}((z_1,\ldots,z_i,x_{i+1},\ldots,x_\di),y) - \tilde{H}((z_1,\ldots,z_{i-1},x_{i},\ldots,x_\di),y)}{z_i-x_i}.$$
A symmetrized variant of this, given by $\overline{\nabla}_\text{SIA}\tilde{H}(x,y,z) := \frac{1}{2}(\overline{\nabla}_\text{IA}\tilde{H}(x,y,z) + \overline{\nabla}_\text{IA}\tilde{H}(z,y,x))$ is again identical to \eqref{eq:PDG}, whenever $\tilde{H}$ is quadratic in each of its arguments.

\subsection{A general framework and Kahan's method}
For ODEs of the form \eqref{original eq}, consider the two-step schemes of the form
\begin{equation}\label{eq:expanded_form}
\frac{x^{n+2}-x^n}{2\Delta t}=S\sum_{i,j=1}^3 \alpha_{ij}(H{''}(x^{n-1+i})x^{n-1+j}+\beta(x^{n-1+i})),
\end{equation}
where $H{''} : \mathbb{R}^\di \rightarrow \mathbb{R}^\di \times \mathbb{R}^\di$ is the Hessian matrix of $H$ and  $\beta(x):=2 \nabla H(x) - H{''}(x)x$. 
For cubic $H$, this scheme is linearly implicit if and only if $\alpha_{33} = 0$.

In this section, we first consider the case when the Hamiltonian is a cubic homogeneous polynomial, in which case the term $\beta(x)$ in \eqref{eq:expanded_form} will disappear, and then generalise the results to the non-homogeneous case.

\begin{theorem}\label{theorem_kahan_general}
The scheme \eqref{eq:expanded_form} with $\alpha_{2 1} = \alpha_{2 3} = \frac{1}{4}$, $\alpha_{i j} = 0$ otherwise, i.e.
\begin{equation}\label{eq:kahan2s}
\frac{x^{n+2}-x^n}{2\Delta t} = \frac{1}{4}S H''(x^{n+1})(x^n+x^{n+2}),
\end{equation}
where $x^1$ is found from $x^0$ by Kahan's method, is equivalent to Kahan's method over two consecutive steps, when applied to ODEs of the form \eqref{original eq} with homogeneous cubic $H$.
\end{theorem}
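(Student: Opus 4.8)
The plan is to recall Kahan's one-step method in a form that makes the Hessian visible, and then simply add two consecutive Kahan steps. First I would recall that, for a quadratic vector field, Kahan's method \cite{kahan1993unconventional, celledoni2012geometric} replaces the homogeneous quadratic part by its symmetric polarisation and the affine part by the arithmetic mean of its endpoint values. For $f(x)=S\nabla H(x)$ with $H$ a homogeneous cubic polynomial, $\nabla H$ is homogeneous of degree two, so $f$ is itself homogeneous quadratic and there is no affine part to average (consistently, $\beta(x)=2\nabla H(x)-H''(x)x$ vanishes identically here by Euler's identity, as already noted after \eqref{eq:expanded_form}). Kahan's method therefore reads
\begin{equation*}
\frac{x^{n+1}-x^{n}}{\Delta t}=\tfrac12 S\bigl(\nabla H(x^{n}+x^{n+1})-\nabla H(x^{n})-\nabla H(x^{n+1})\bigr).
\end{equation*}

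The key algebraic fact I would establish next is the polarisation identity for homogeneous cubic $H$:
\begin{equation*}
\nabla H(x+y)-\nabla H(x)-\nabla H(y)=H''(x)\,y=H''(y)\,x .
\end{equation*}
This holds because the third partial derivatives $\partial_i\partial_j\partial_k H$ are constants symmetric in all three indices, so the cross term in the expansion of $\nabla H(x+y)$ is exactly $H''(x)y$, and its value is unchanged under $x\leftrightarrow y$. Hence a single Kahan step may be written in either of the two equivalent symmetric forms
\begin{equation*}
x^{n+1}-x^{n}=\tfrac{\Delta t}{2}\,S H''(x^{n+1})\,x^{n}=\tfrac{\Delta t}{2}\,S H''(x^{n})\,x^{n+1},
\end{equation*}
and similarly for the step from $x^{n+1}$ to $x^{n+2}$.

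Finally I would add the step from $x^{n}$ to $x^{n+1}$, written with $H''$ evaluated at $x^{n+1}$ acting on $x^{n}$, to the step from $x^{n+1}$ to $x^{n+2}$, written with $H''$ evaluated at $x^{n+1}$ acting on $x^{n+2}$. The left-hand sides telescope to $x^{n+2}-x^{n}$, and the right-hand sides share the common factor $S H''(x^{n+1})$, so they combine to $\tfrac{\Delta t}{2}S H''(x^{n+1})(x^{n}+x^{n+2})$, which is exactly \eqref{eq:kahan2s}. Since $x^{1}$ is obtained from $x^{0}$ by Kahan's method, the whole Kahan sequence satisfies \eqref{eq:kahan2s}; conversely \eqref{eq:kahan2s} is linearly implicit and hence determines $x^{n+2}$ uniquely from $(x^{n},x^{n+1})$ whenever $I-\tfrac{\Delta t}{2}S H''(x^{n+1})$ is invertible, so the two recursions generate the same iterates.

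I expect the only delicate point to be the careful use of the symmetry $H''(x)y=H''(y)x$: it is precisely what lets both Kahan substeps be rewritten with the Hessian ``frozen'' at the common node $x^{n+1}$, so that the two right-hand sides factor and add cleanly into the single middle-evaluated term of \eqref{eq:kahan2s}. One must also take care to use the exact definition of Kahan's method and to keep in mind that, in the homogeneous case, there are no lower-order terms (no $\beta$-contribution) that could spoil the telescoping.
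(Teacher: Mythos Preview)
Your proof is correct and follows essentially the same route as the paper: both arguments add two consecutive Kahan steps and exploit the symmetry $H''(x)y=H''(y)x$ for homogeneous cubic $H$ to collapse everything onto $SH''(x^{n+1})$. The only cosmetic differences are that the paper starts from the three-term Runge--Kutta form \eqref{one step Kahan} and simplifies after averaging, whereas you first reduce each single step to $x^{n+1}-x^{n}=\tfrac{\Delta t}{2}SH''(x^{n+1})x^{n}$ and then add; and for the converse the paper subtracts one Kahan step from \eqref{Two_step_kahan} to recover the next, while you invoke uniqueness of the linearly implicit update---both are valid.
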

\begin{proof}
As shown in \cite{celledoni2012geometric}, Kahan's method can be written into a Runge--Kutta form
\begin{align}\label{one step Kahan}
 \frac{x^{n+1}-x^n}{\Delta t} = -\frac{1}{2}f(x^{n})+2f(\frac{x^{n}+ x^{n+1}}{2})-\frac{1}{2}f(x^{n+1}).
\end{align}
Two steps of this can be written as
\begin{equation}\label{Two_step_kahan}
\frac{x^{n+2}-x^n}{2\Delta t}=-\frac{1}{4}f(x^{n})-\frac{1}{2}f(x^{n+1})-\frac{1}{4}f(x^{n+2})+f(\frac{x^{n}+ x^{n+1}}{2})+f(\frac{x^{n+1}+ x^{n+2}}{2}).
\end{equation}
Using that for a homogeneous cubic $H$ we have $\nabla H(x) = \frac{1}{2}H''(x) x$, $H''(x) y = H''(y) x$ and $H''(x+y)=H''(x)+H''(y)$, and inserting $f(x) = S \nabla H(x)$ in \eqref{Two_step_kahan}, we get \eqref{eq:kahan2s}.
On the other hand, if we have found $x^{n+1}$ by Kahan's method and $x^{n+2}$ by \eqref{Two_step_kahan}, we see that by subtracting \eqref{one step Kahan} from \eqref{Two_step_kahan} we get \eqref{one step Kahan} with $n$ replaced by $n+1$.
\end{proof}

\begin{remark}
The scheme \eqref{eq:kahan2s} with the first step computed by Kahan's method preserves the polarised invariant $\tilde{H}(x^n,x^{n+1})= \frac{1}{6}(x^n)^T H''(\frac{x^n+x^{n+1}}{2})x^{n+1}$, since Kahan's method preserves this polarised invariant \cite{celledoni2012geometric}. We note that the scheme \eqref{eq:kahan2s} satisfies
$$(x^n)^T H''(x^n) x^{n+1} = (x^{n+1})^T H''(x^{n+2}) x^{n+2},$$
independent of how $x^1$ is found, following from the skew symmetry of the matrix $S$.  However, it preserves the polarised invariant $\frac{1}{6}(x^n)^T H''(\frac{x^n+x^{n+1}}{2})x^{n+1}$ only if Kahan's method or an equivalent scheme is used to calculate $x^1$ from $x^0$.
\end{remark}



A special case of the PDG method which preserves the same polarised Hamiltonian as Kahan's method, can also be written on the form \eqref{eq:expanded_form}:
\begin{theorem}\label{theorem_pdg_general}
For a homogeneous cubic $H$ and the polarised energy given by
$$\tilde{H}(x,y) = \frac{1}{6}x^T H''(\frac{x+y}{2})y,$$
the scheme \eqref{eq:polscheme} with the PDG \eqref{eq:PDG} applied to \eqref{original eq} is equivalent to \eqref{eq:expanded_form} with $\alpha_{2 1} = \alpha_{2 2} = \alpha_{2 3} = \frac{1}{6}$, $\alpha_{i j} = 0$ otherwise, i.e.
\begin{align}\label{PDg-two step}
\frac{x^{n+2}-x^n}{2\Delta t} &= \frac{1}{6}S H''(x^{n+1})(x^n+x^{n+1}+x^{n+2}).
\end{align}
\end{theorem}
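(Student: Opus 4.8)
The plan is to verify that the stated $\tilde H$ is an admissible polarised energy, reduce the equivalence to a short gradient computation that exploits the identities available for a homogeneous cubic $H$, and then read off the coefficients $\alpha_{ij}$.

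First I would check the two defining properties of a polarised energy. Symmetry of $\tilde H(x,y)=\frac16 x^T H''(\frac{x+y}{2})y$ under $x\leftrightarrow y$ is immediate from the symmetry of the Hessian. For consistency, $\tilde H(x,x)=\frac16 x^T H''(x)x=\frac13 x^T\nabla H(x)=H(x)$, using the homogeneous-cubic identity $\nabla H(x)=\frac12 H''(x)x$ and Euler's relation $x^T\nabla H(x)=3H(x)$. Since $\tilde H$ is at most quadratic in each argument, Proposition \ref{pdg} applies and the associated PDG is \eqref{eq:PDG}.

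Next I would rewrite $\tilde H$ so that it is manifestly quadratic in its first argument. Linearity of $H''$ for a homogeneous cubic gives $H''(\frac{x+y}{2})=\frac12(H''(x)+H''(y))$, and then $H''(x)y=H''(y)x$ together with $H''(y)y=2\nabla H(y)$ yields
\[
\tilde H(x,y)=\frac{1}{12}\big(x^T H''(x)y+x^T H''(y)y\big)=\frac{1}{12}x^T H''(y)x+\frac16\nabla H(y)^T x.
\]
Differentiating in $x$,
\[
\nabla_x\tilde H(x,y)=\frac16 H''(y)x+\frac16\nabla H(y)=\frac16\big(H''(x)y+\nabla H(y)\big),
\]
and substituting into \eqref{eq:PDG},
\[
\overline{\nabla}\tilde H(x,y,z)=2\,\nabla_x\tilde H\Big(\frac{x+z}{2},y\Big)=\frac16 H''(y)(x+z)+\frac13\nabla H(y)=\frac16 H''(y)(x+y+z),
\]
again by $\nabla H(y)=\frac12 H''(y)y$. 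Inserting this PDG into \eqref{eq:polscheme} reproduces \eqref{PDg-two step} verbatim.

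Finally I would match \eqref{PDg-two step} with \eqref{eq:expanded_form}. For a homogeneous cubic, $\beta(x)=2\nabla H(x)-H''(x)x=0$, so \eqref{eq:expanded_form} becomes $\frac{x^{n+2}-x^n}{2\Delta t}=S\sum_{i,j=1}^3\alpha_{ij}H''(x^{n-1+i})x^{n-1+j}$; here $i=2$ picks the Hessian at $x^{n+1}$ and $j=1,2,3$ the iterates $x^n,x^{n+1},x^{n+2}$, so \eqref{PDg-two step} corresponds exactly to $\alpha_{21}=\alpha_{22}=\alpha_{23}=\frac16$ with all other $\alpha_{ij}=0$. I do not expect a real obstacle: the argument is bookkeeping with the special identities of a homogeneous cubic. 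The one delicate point is that in $\tilde H(x,y)$ the variable $x$ sits both inside the argument of $H''$ and as an explicit factor; the trick that makes the differentiation routine is to first collapse $x^T H''(x)y$ to $x^T H''(y)x$, after which the Hessian no longer depends on the differentiation variable.
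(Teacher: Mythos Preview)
Your proof is correct and follows essentially the same route as the paper: compute $\nabla_x\tilde H(x,y)$, substitute $x\mapsto\frac{x+z}{2}$, and collapse using the homogeneous-cubic identities $H''(x)y=H''(y)x$ and $\nabla H(y)=\tfrac12 H''(y)y$. The only cosmetic difference is that you first rewrite $\tilde H$ so the Hessian argument no longer contains $x$ before differentiating, whereas the paper differentiates $\frac16 x^T H''(\frac{x+y}{2})y$ directly via the product rule and obtains $\nabla_x\tilde H(x,y)=\frac{1}{12}H''(2x+y)y$; your extra checks that $\tilde H$ is an admissible polarised energy and the explicit matching of the $\alpha_{ij}$ are welcome additions the paper omits.
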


\begin{proof}
\begin{equation*}
\nabla_x \tilde{H}(x,y) = \frac{1}{6}H''(\frac{x+y}{2})y + \frac{1}{6}H''(\frac{y}{2})x = \frac{1}{12}H''(2 x+y)y,
\end{equation*}
and thus
\begin{equation*}
\overline{\nabla} \tilde{H}(x,y,z) = 2 \nabla_x \tilde{H}(\frac{x+z}{2},y) = \frac{1}{6}H''(x+y+z)y = \frac{1}{6}H''(y)(x+y+z).
\end{equation*}
\end{proof}

It can be shown that many well known Runge--Kutta methods performed over two consecutive steps are methods in the class \eqref{eq:expanded_form} when applied to \eqref{original eq} with $H$ cubic.
As two examples, the implicit midpoint method over two steps is \eqref{eq:expanded_form} with $\alpha_{1 1} = \alpha_{3 3} = \frac{1}{16}, \alpha_{2 1} = \alpha_{2 2} = \alpha_{2 3} = \frac{1}{8}$, $\alpha_{i j} = 0$ otherwise, while the trapezoidal rule is \eqref{eq:expanded_form} with $\alpha_{1 1} = \alpha_{3 3} = \frac{1}{8}, \alpha_{2 2} = \frac{1}{4}$, $\alpha_{i j} = 0$ otherwise. The integral-preserving average vector field method \cite{quispel08anc} over two steps is \eqref{eq:expanded_form} with $\alpha_{1 1} = \alpha_{2 1} = \alpha_{2 3} = \alpha_{3 3} = \frac{1}{12}, \alpha_{2 2} = \frac{1}{6}$, $\alpha_{i j} = 0$ otherwise.


Now, in the cases where $H$ is non-homogeneous, one can use the technique employed in \cite{celledoni2012geometric}, i.e. adding one variable $x_0$ to generate an equivalent problem to the original one, for a homogeneous Hamiltonian $\bar{H} : \mathbb{R}^{\di+1} \rightarrow \mathbb{R}$ defined such that $\bar{H}(1,x_1,\ldots,x_\di) = H(x_1,\ldots,x_\di)$. Also constructing the $(\di+1) \times (\di+1)$ skew-symmetric matrix $\bar{S}$ by adding a zero initial row and a zero initial column to $S$, we get that solving the system
\begin{equation}
\begin{aligned}
\dot{\bar{x}}&=\bar{S}\nabla \bar{H}(\bar{x}),\quad \bar{x}\in \mathbb{R}^{\di+1}\\
\bar{x}(0)&=(1,x^{0}),
\end{aligned}
\label{eq:homogeneoussystem}
\end{equation}
is equivalent to solving \eqref{original eq}. Following the above results for the homogeneous $\bar{H}$ and \eqref{eq:homogeneoussystem}, we can generalise Theorem \ref{theorem_kahan_general} and Theorem \ref{theorem_pdg_general} for all cubic $H$. Generalisations of the preservation properties follow directly; e.g., Kahan's method and the PDG method can preserve the perturbed energy $\tilde{H}(x^n,x^{n+1}):=\frac{1}{6}(\bar{x}^n)^T\bar{H}''(\frac{\bar{x}^n+\bar{x}^{n+1}}{2})\bar{x}^{n+1}$ also for non-homogeneous cubic $H$.

\section {Numerical experiments}\label{Numerical examples}
To have a better understanding of the above methods, we will apply them to systems of two different PDEs: the Korteweg--de Vries (KdV) equation and the Camassa--Holm equation. We will compare our methods to the midpoint method, which is a symplectic, fully implicit method. We solve the two PDEs by discretizing in space to obtain a Hamiltonian ODE system of the type \eqref{original eq} and then applying the PDG method (denoted by PDGM), Kahan's method (Kahan) and the midpoint method (MP) to this.

\subsection {Camassa--Holm equation}
In this section, we consider the Camassa--Holm equation
\begin{equation*}
u_t - u_{xxt} + 3 u u_x = 2 u_x u_{xx} + u u_{xxx}
\end{equation*}
defined on the periodic domain $ \Sb := \RR / L \mathbb{Z} $.  
It has the conserved quantities
\begin{align*}
\mathcal{H}_1 \left[u \right] = \frac{1}{2} \int_{\Sb} (u^2 + u_x^2) \, \rd x,\qquad 
\mathcal{H}_2 \left[u \right] = \frac{1}{2} \int_{\Sb} \left( u^3 + u u_x^2\right) \rd x. 
\end{align*}
Here we consider the variational form of the Hamiltonian $\mathcal{H}_2$:
\begin{align}
 (1-\partial_x^2) u_t = - \partial_x \frac{\delta \mathcal{H}_2}{\delta u }, \qquad \frac{\delta \mathcal{H}_2}{\delta u } = \frac{3}{2} u^2 + \frac{1}{2} u_x^2 - ( u u_x)_x . \label{eq_CH_variation}
\end{align}

We follow the approach presented in \cite{celledoni2012preserving} and semi-discretize the energy $ \mathcal{H}_2 $ of \eqref{eq_CH_variation} as
\begin{equation}\label{eq_discrete_energy}
H_2 (u) \Delta x= \frac{1}{2} \sum_{k=1}^K \left( u_k^3 + u_k \frac{(\fd_x u_k)^2 + (\bd_x u_k)^2}{2} \right) \Delta x, 
\end{equation}
where the difference operators $\fd_x$ and $\bd_x$ are defined by
\begin{equation*}
\fd_x u_k := \frac{u_{k+1} - u_k}{\Delta x}, \qquad \bd_x u_k := \frac{u_{k} - u_{k-1}}{\Delta x}.
\end{equation*}
For later use, we here also introduce the notation
\begin{equation*}
\cd_x u_k := \frac{u_{k+1} - u_{k-1}}{2\Delta x}, \qquad \cd[2]_x u_k := \frac{u_{k+1} - 2 u_k + u_{k-1}}{(\Delta x)^2},
\end{equation*}
\begin{equation*}
\fa_x u_k := \frac{u_{k+1} + u_k}{2}, \qquad \ba_x u_k := \frac{u_{k} +u_{k-1}}{2},
\end{equation*}
and the matrices corresponding to the difference operators $\fd_x$, $\bd_x$, $\cd_x$, $\cd[2]_x$, $\fa_x$ and $\ba_x$, which are denoted by $D^+$, $D^-$, $\Dc$, $\Dc[2]$, $M^+$ and $M^-$. Denoting the numerical solution $U=[u_1,\ldots,u_K]^T$, and by using the properties of the above difference operators, we thus get 
\[  \nabla H_2 (U)  =\frac{3}{2}U^2_{\boldsymbol{\cdot}}+\frac{1}{2}M^-(D^+U)_{\boldsymbol{\cdot}}^2-\frac{1}{2}\Dc[2]U_{\boldsymbol{\cdot}}^2,\]
where $U_{\boldsymbol{\cdot}}^2$ is the elementwise square of $U$. Then the semi-discretized system for the Camassa--Holm equation becomes
\begin{align}\label{semi-ode-CHeq}
\dot{U}=S\nabla H_2(U)
          = -(I-\Dc[2])^{-1}\Dc\nabla H_2(U).
\end{align}
The above-mentioned schemes applied to \eqref{semi-ode-CHeq} give us
\begin{align}
(I-\Dc[2])\frac{U^{n+1}-U^n}{\Delta t}&=-\Dc\nabla H_2(\frac{U^{n+1}+U^n}{2}), \qquad \text{(MP)}\label{semi-ode-CH_Midpoint}\\
(I-\Dc[2])\frac{U^{n+1}-U^n}{\Delta t}&=-\frac{1}{2}\Dc H_2^{''}(U^{n})U^{n+1}, \qquad \text{(Kahan)}\label{semi-ode-CH_Kahan_two}\\
(I-\Dc[2])\frac{U^{n+2}-U^n}{2\Delta t}&=-\Dc \overline{\nabla} \tilde{H}_2 (U^{n},U^{n+1},U^{n+2}), \qquad \text{(PDGM)}\label{discreteE-CH_polar}
\end{align}
where $H_2^{''}(U)=3\,\text{diag}(U)+M^-\,\text{diag}(D^+U)D^+- \Dc[2]\,\text{diag}(U)$ is the Hessian of $H_2(U)$ and\\
$\overline{\nabla} \tilde{H}_2 (U^{n},U^{n+1},U^{n+2})$ is the PDG of Proposition \ref{pdg} with polarised discrete energy
\begin{equation*}\label{multistep_energy}
\begin{split}
\tilde{H}_2 (U^n,U^{n+1}) \Delta x := &\frac{1}{2} \sum_{k=1}^K\bigg( u_k^{n} u^{n+1}_k \frac{u_k^{n}+ u_k^{n+1}}{2} + a  (\fa_x \frac{u_k^{n} + u_k^{n+1}}{2}) ( \fd_x u_k^{n} ) (\fd_x u_k^{n+1}) \\
& + (1-a) \frac{ (\fa_x u_k^{n}) (\fd_x u_k^{n+1})^2 + (\fa_x u_k^{n+1}) (\fd_x u_k^{n})^2 }{2}\bigg) \Delta x,
\end{split}
\end{equation*}
for some $a \in \mathbb{R}$, typically between $-1$ and $2$.
\begin{remark}
We performed numerical experiments for finding a good choice of the parameter $a$ in PDGM \eqref{discreteE-CH_polar} and based on these set $a=\frac{1}{2}$ in the following.
\end{remark}

\subsubsection {Numerical tests for the Camassa--Holm equation}

\noindent \textbf{Example 1 (Single peakon solution):} In this numerical test, we consider the same experiment as in \cite{cohen2008multi}, where multisymplectic schemes are considered for the Camassa--Holm equation with
$$u(x,0)=\frac{\cosh(\lvert x-\frac{L}{2}\rvert-\frac{L}{2})}{\cosh(L/2)},$$
$x\in[0,L]$, $L=40$, $t\in[0,T]$, $T=5$, spatial step size $\Delta x=0.04$ and time step size $\Delta t=0.0002$. All our methods keep a shape close to the exact solution except some small oscillatory tails, also observed in \cite{cohen2008multi}, resulting from the semi-discretization, see Figure \ref{ener_solution} (the right two plots). 
The numerical simulations show that the global error is mainly due to the shape error\footnote{\label{myfootnote} Shape error is defined by $\epsilon_{\text{shape}}:=\underset{\tau}{\text{min}}\parallel U^n-u(\cdot-\tau)\parallel_2^2$, and phase error is defined by 
$\epsilon_{\text{phase}}:=\lvert\underset{\tau}{\text{argmin}}\parallel U^n-u(\cdot-\tau)\parallel_2^2-ct_n\rvert$, \cite{dahlby2011general}.},
see Figure \ref{shape-phase-global error}. In Figure \ref{ener_solution} (the left plot), we can see that the numerical energy for all the methods oscillate, but it appears to be bounded. Here we consider also coarser grids. We observe that there appear some small wiggles for both PDGM and Kahan's method for $\Delta t=0.02$ and long time integration $T=100$. However, the wiggles in the solution by PDGM are much more evident than those in the solution of Kahan's method, see Figure \ref{solution-CH-1peakon-Tkahan} (the left two plots). We keep on increasing $\Delta t$ to $0.15$ and $0.2$; we observe that the numerical solution obtained with the PDG method with $\Delta t=0.15$ suffers from evident numerical dispersion, while Kahan's method seems to keep the shape well when comparing to the exact wave. Spurious oscillations appear also in Kahan's method when the time-step is increased to the value $\Delta t=0.2$, see Figure \ref{solution-CH-1peakon-Tkahan} (right).  

\begin{figure}[h!]
\hspace{-10pt}
\centering
      \begin{subfigure}[b]{0.33\textwidth}
      \centering
                \includegraphics[width=0.99\textwidth]{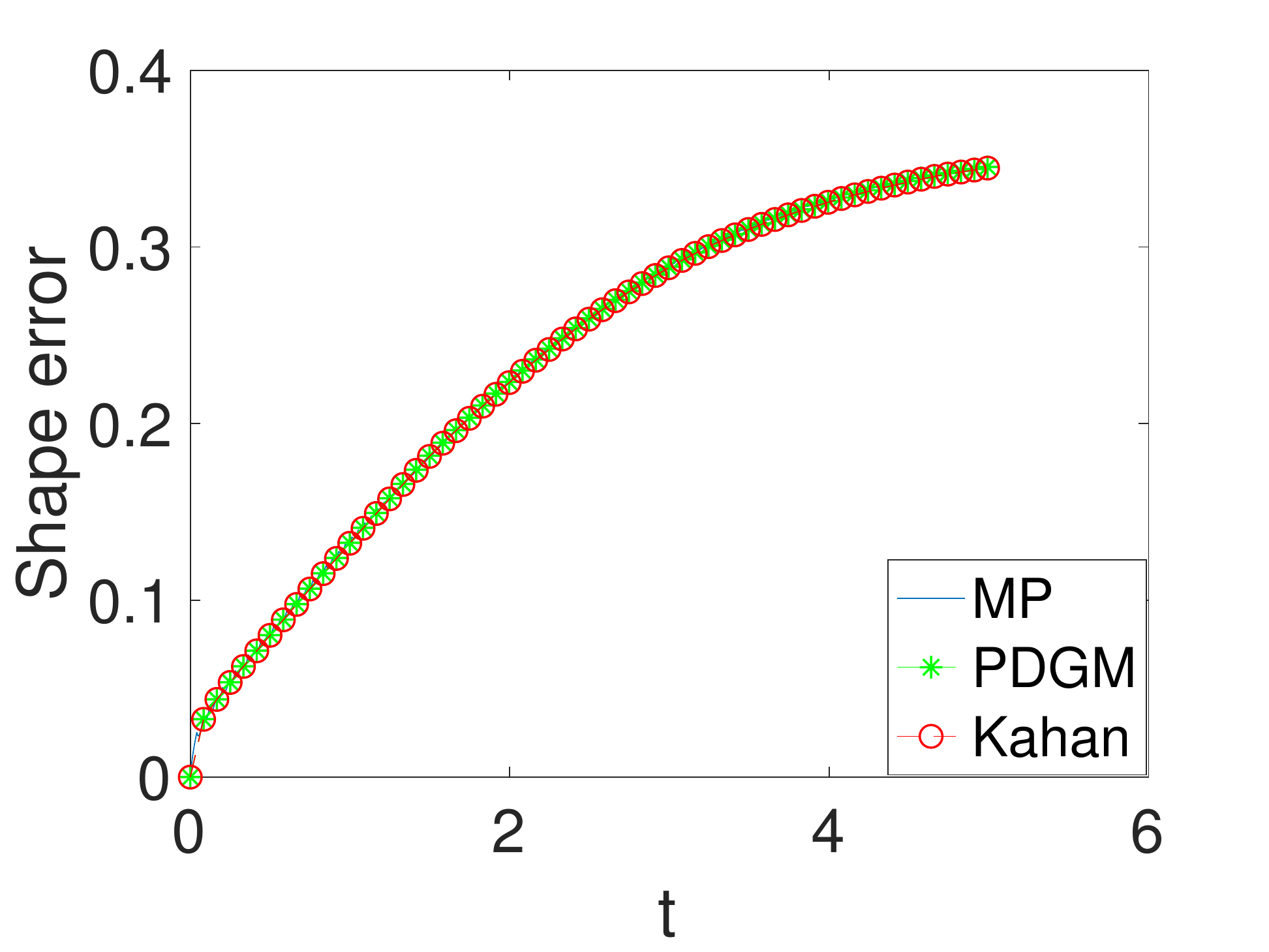}
        \end{subfigure}
        \begin{subfigure}[b]{0.33\textwidth}
        \centering
                \includegraphics[width=0.99\textwidth]{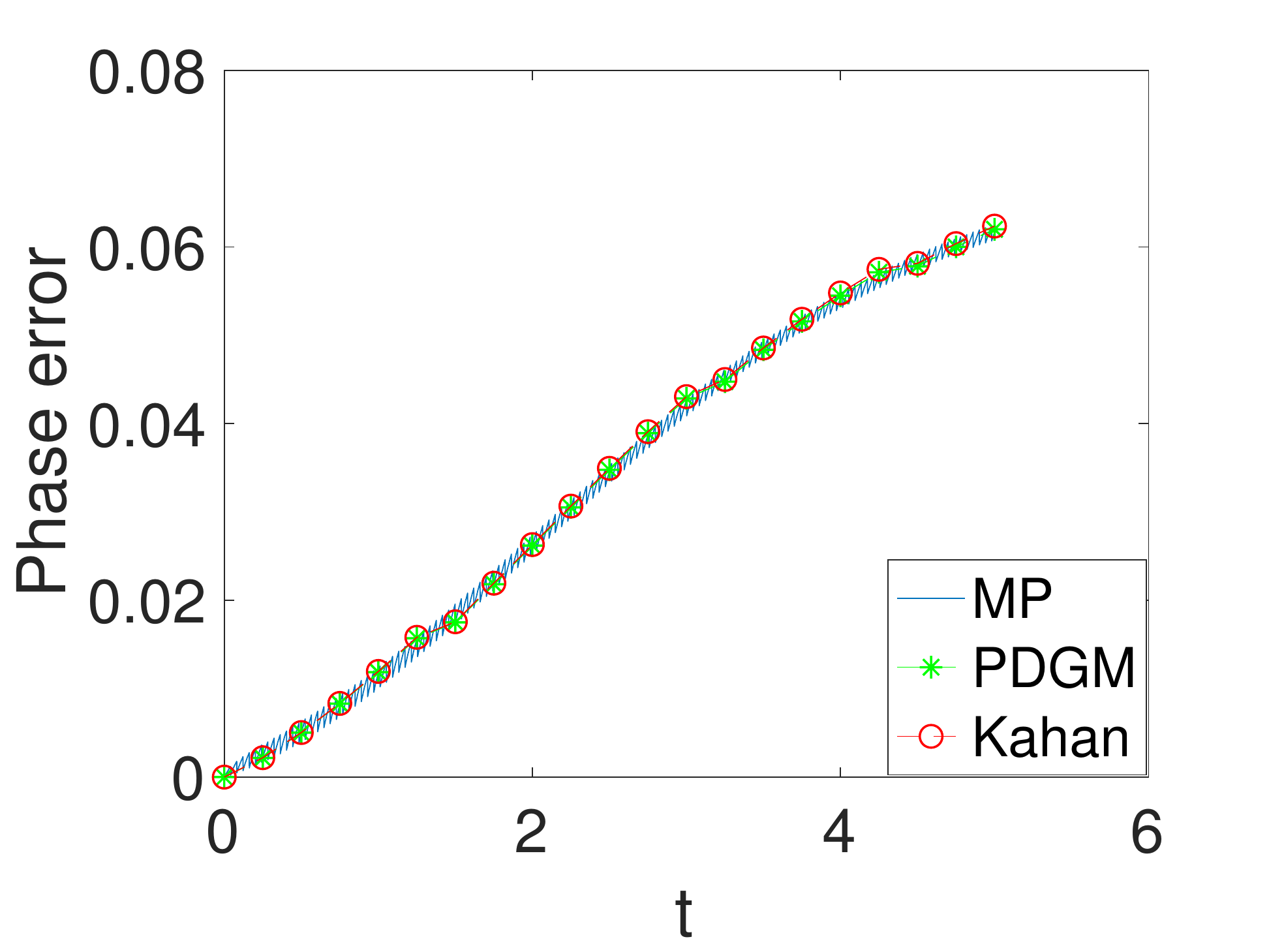}
        \end{subfigure}
 \begin{subfigure}[b]{0.33\textwidth}
      \centering
                \includegraphics[width=0.99\textwidth]{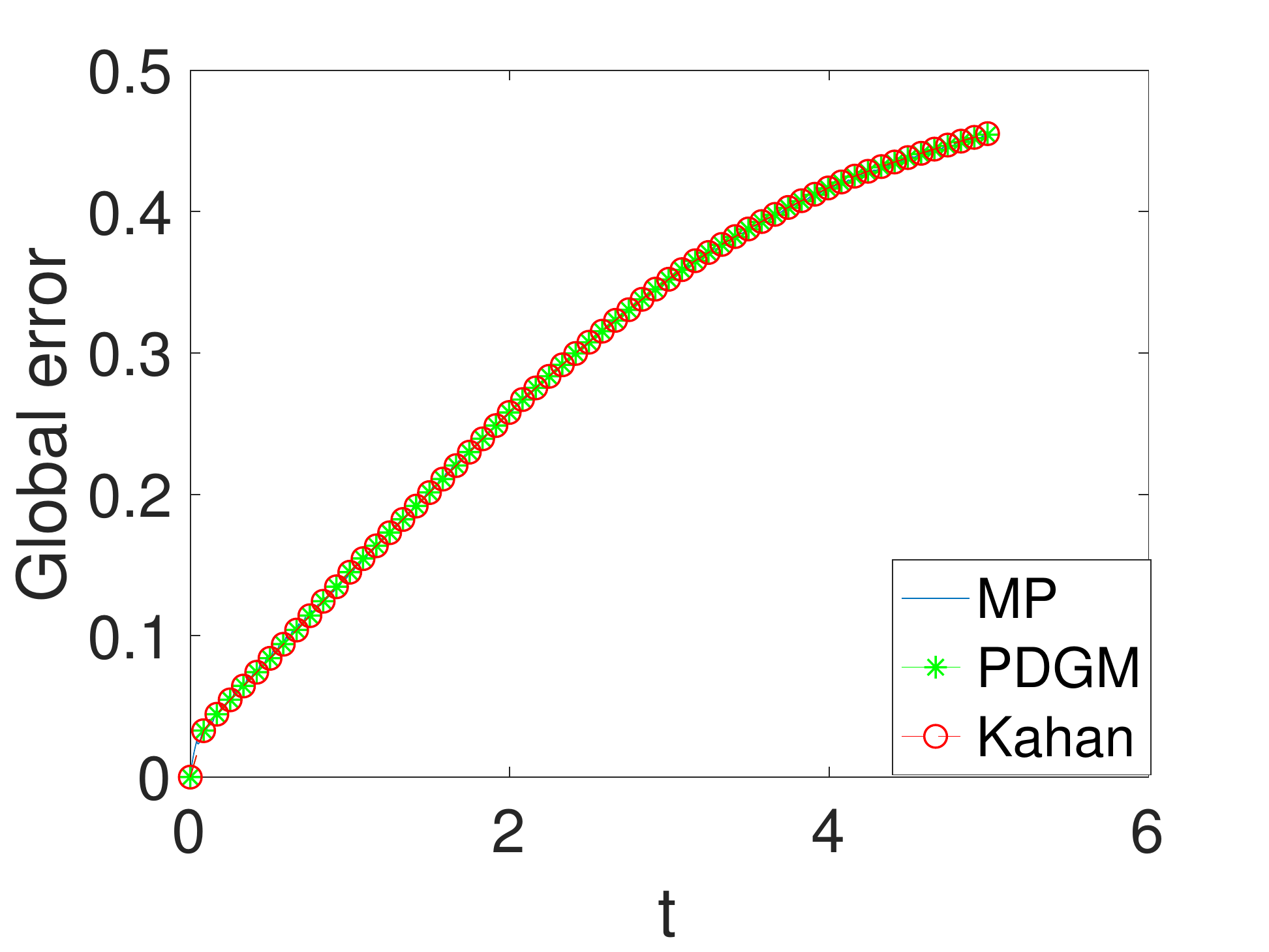}
        \end{subfigure}
      \caption{In this experiment, space step size $\Delta x=0.04$ and time step size $\Delta t=0.0002$. \textbf{Left:} shape error, \textbf{middle:} phase error, \textbf{right:} global error.}\label{shape-phase-global error}
\vspace{20pt}
\end{figure}
\vspace{-40pt}
\begin{figure}[h!]
\hspace{-10pt}
\centering
      \begin{subfigure}[b]{0.33\textwidth}
      \centering
                \includegraphics[width=0.99\textwidth]{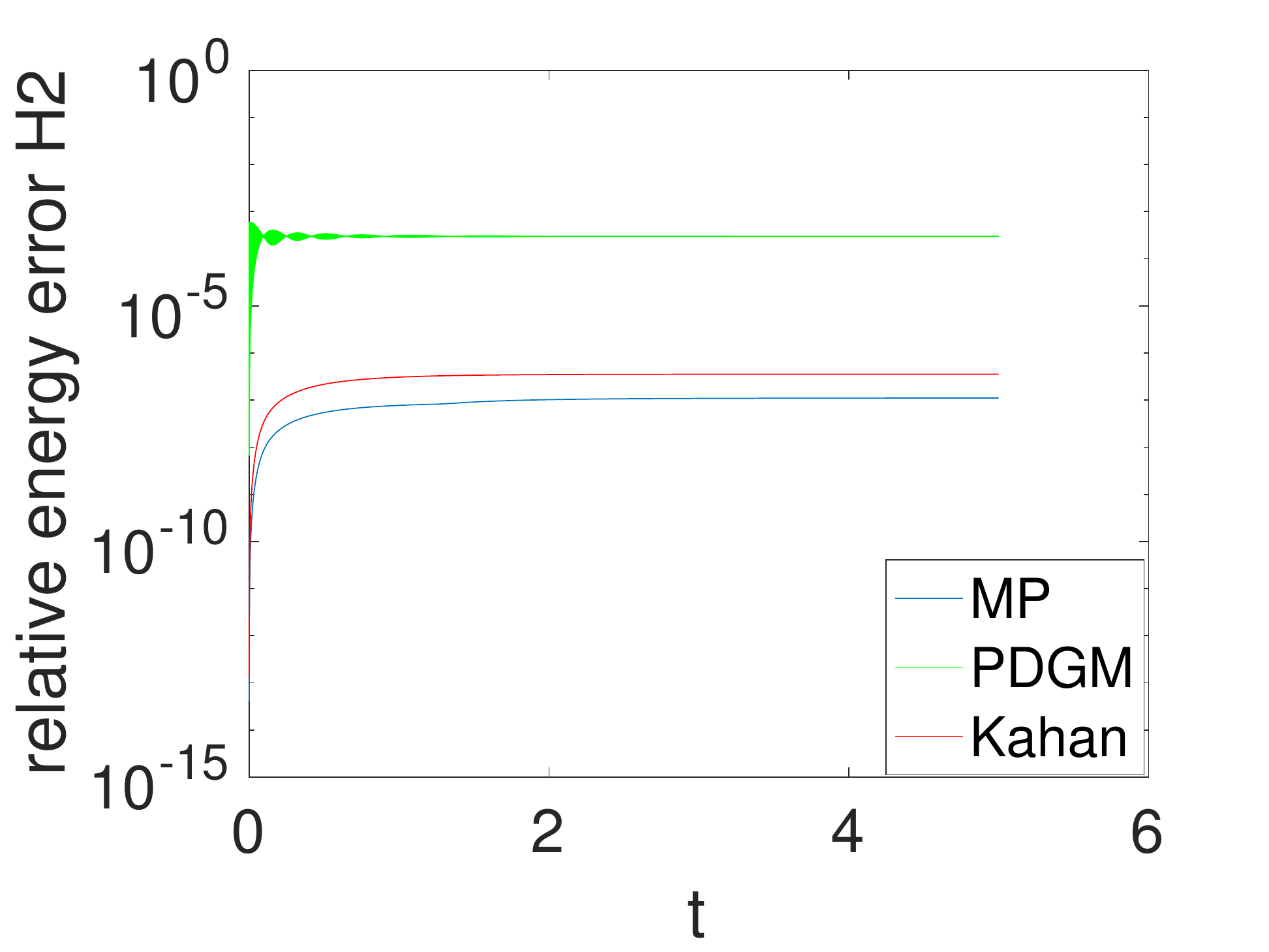}
        \end{subfigure}
        \begin{subfigure}[b]{0.33\textwidth}
        \centering
                \includegraphics[width=0.99\textwidth]{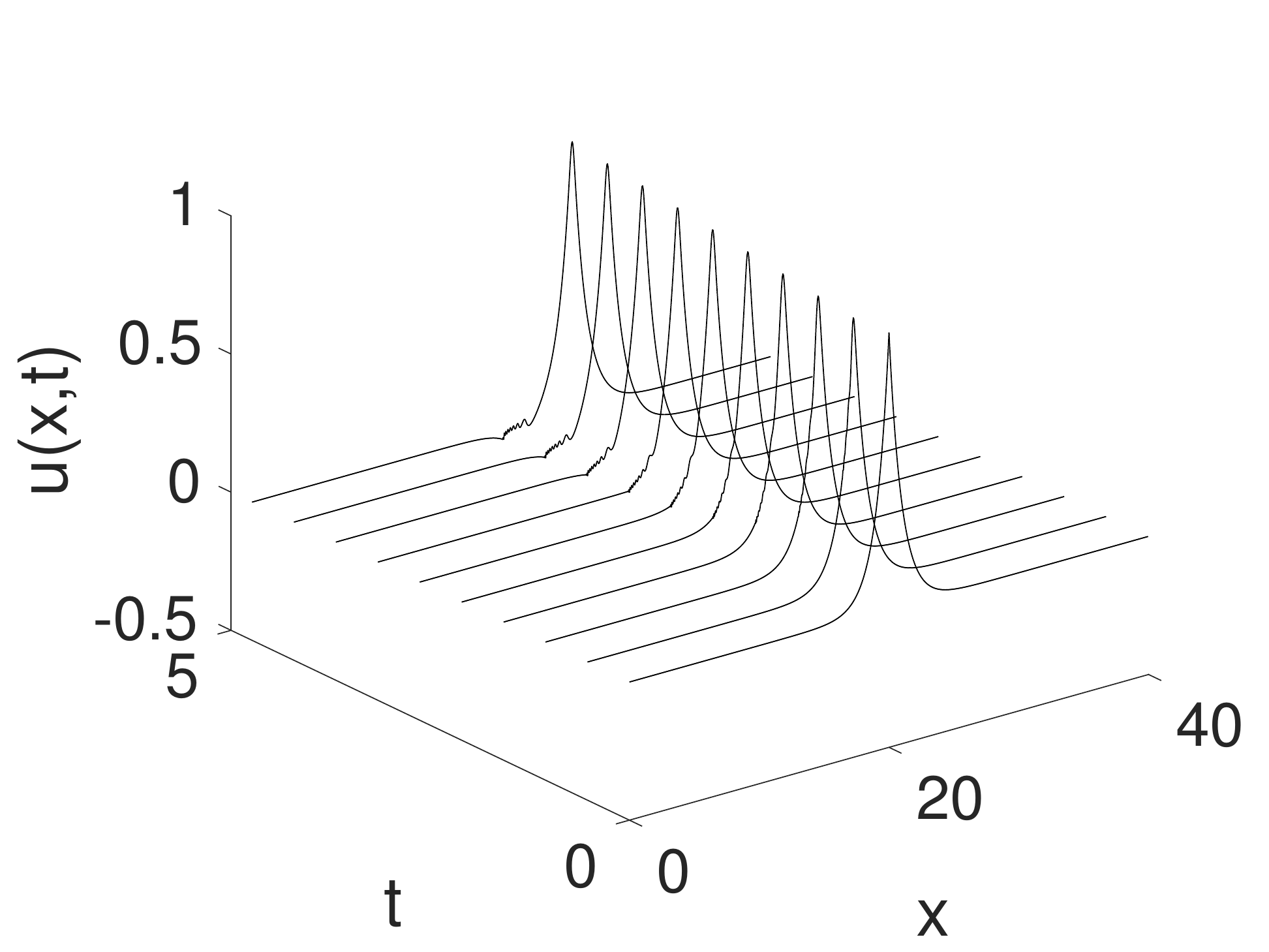}
        \end{subfigure}
 \begin{subfigure}[b]{0.33\textwidth}
      \centering
                \includegraphics[width=0.99\textwidth]{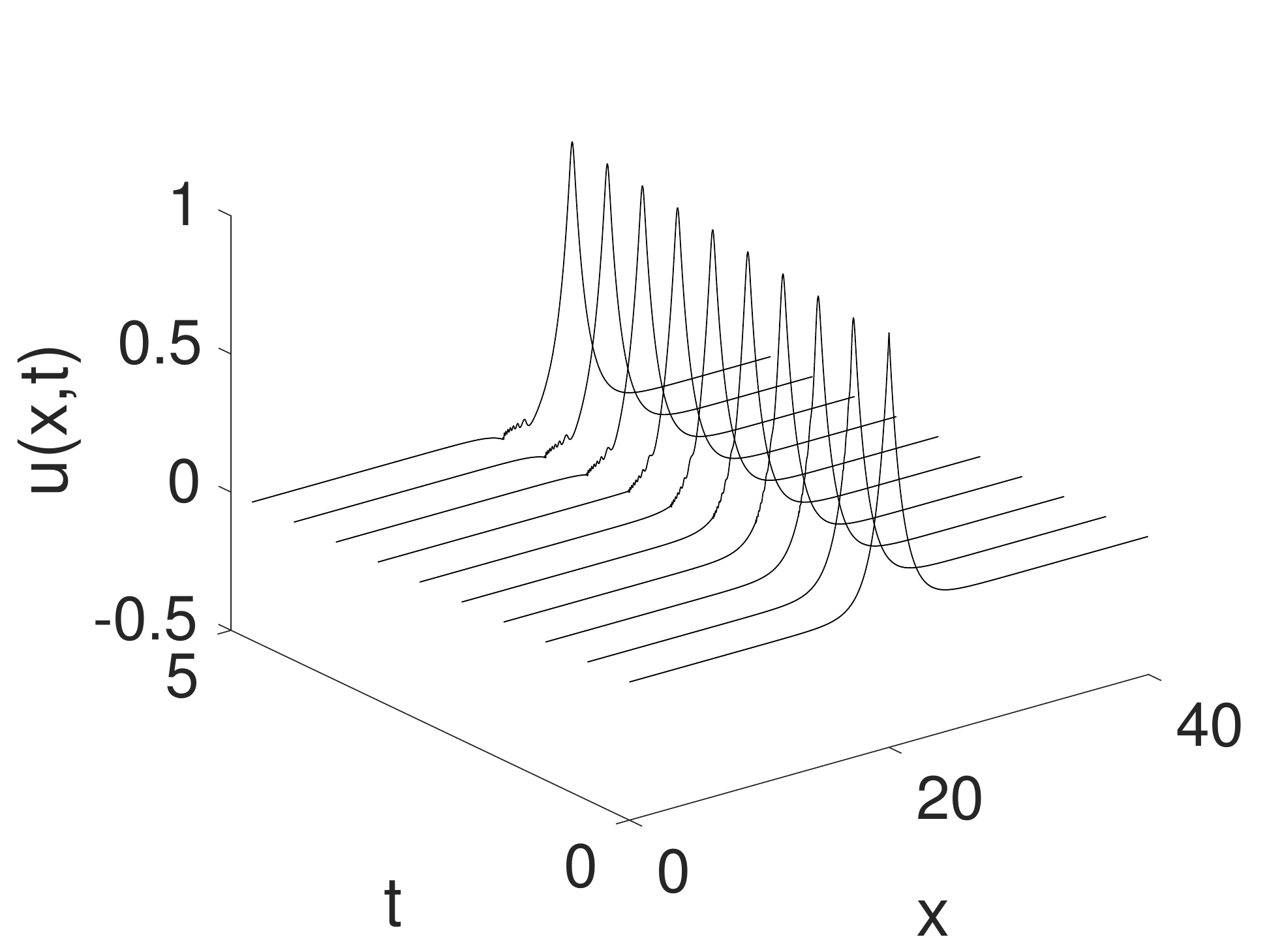}
        \end{subfigure}
      \caption{In this experiment, $\Delta x=0.04$, $\Delta t=0.0002$. \textbf{Left:} relative energy errors.  \textbf{middle:} propagation of the wave by PDGM. \textbf{right:} propagation of the wave by Kahan's method.}\label{ener_solution}
\end{figure}
\begin{figure}[H]
\hspace{-10pt}
\centering
\vspace{10pt}
 \begin{subfigure}[b]{0.33\textwidth}
        \centering
                \includegraphics[width=0.99\textwidth]{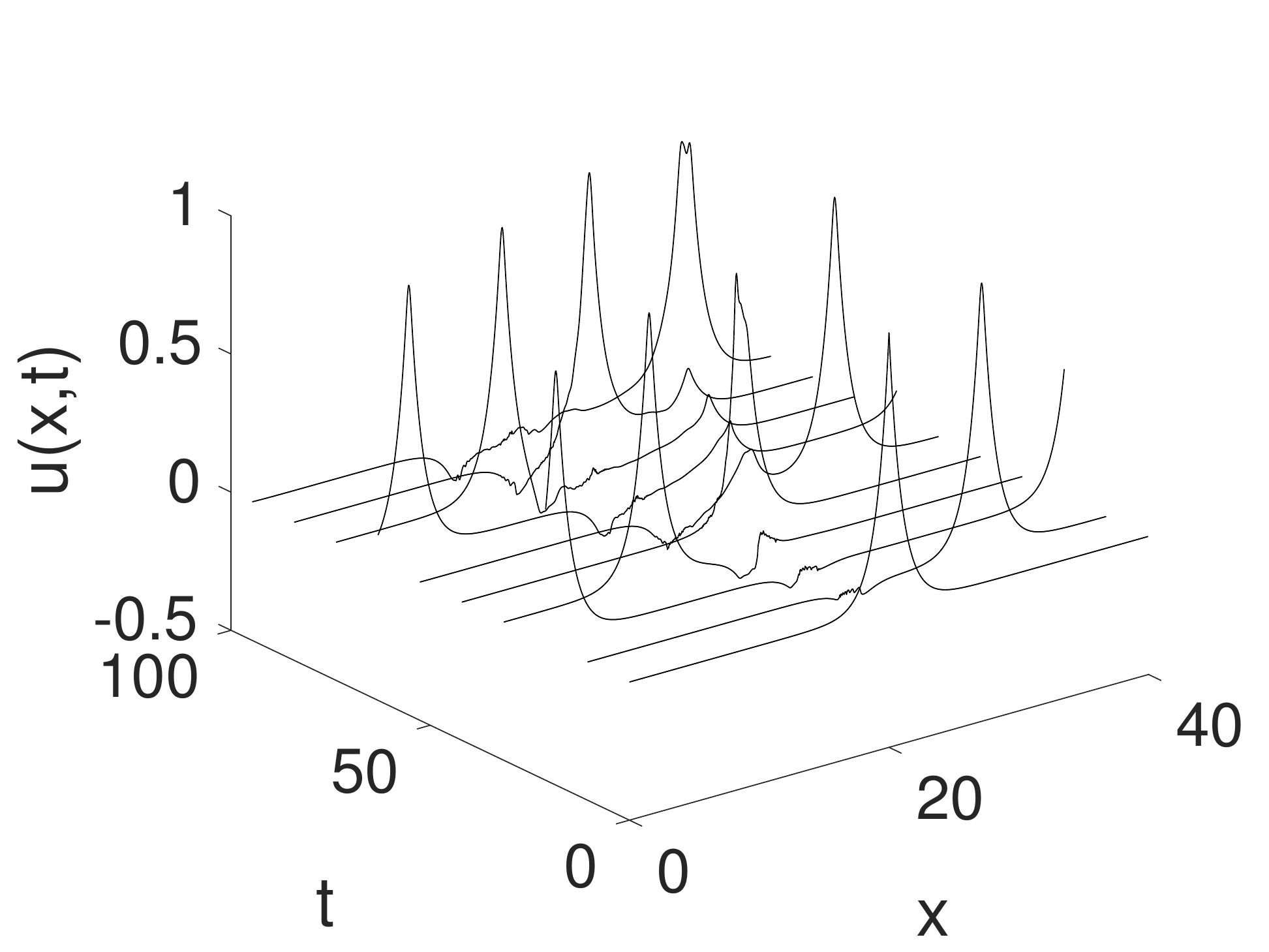}
        \end{subfigure}
        \begin{subfigure}[b]{0.33\textwidth}
        \centering
                \includegraphics[width=0.99\textwidth]{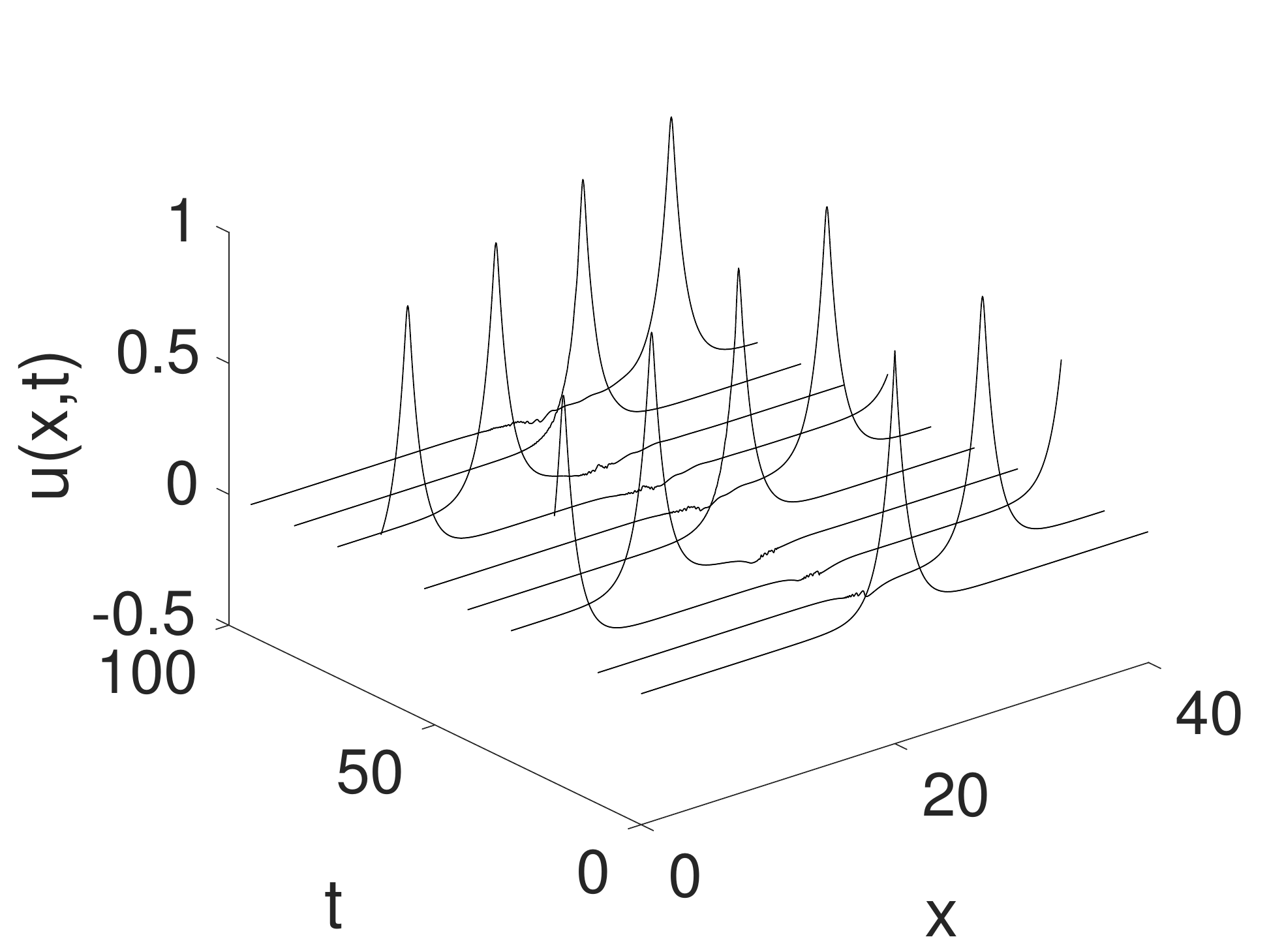}
        \end{subfigure}
 \begin{subfigure}[b]{0.33\textwidth}
      \centering
                \includegraphics[width=0.99\textwidth]{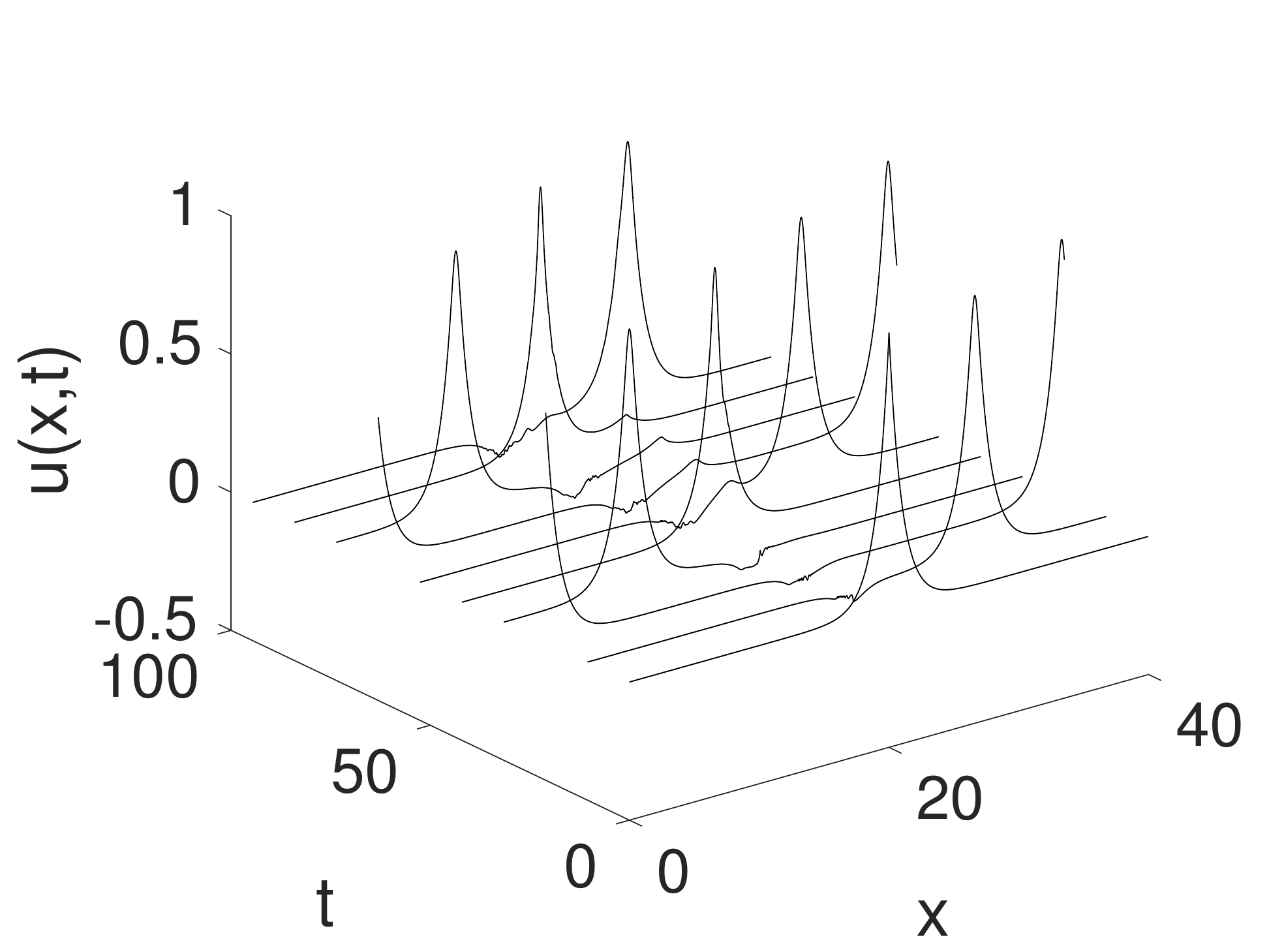}
        \end{subfigure}
      \caption{In this experiment, space step size $\Delta x=0.04$. \textbf{Left:} propagation of the wave by PDGM, $\Delta t=0.02$, \textbf{middle:} propagation of the wave by Kahan's method, $\Delta t=0.02$, \textbf{right:} propagation of the wave by Kahan's method, $\Delta t=0.15$.}\label{solution-CH-1peakon-Tkahan}
\vspace{10pt}
\end{figure}

\noindent \textbf{Example 2 (Two peakons solution):} Now we consider the initial condition
$$u(x,0)=\frac{\cosh(\lvert x-\frac{L}{4}\rvert-\frac{L}{2})}{\cosh(L/2)}+\frac{3}{2}\frac{\cosh(\lvert x-\frac{3L}{4}\rvert-\frac{L}{2})}{\cosh(L/2)},$$
where $x\in[0,L]$, $L=40$, $t\in[0,T]$, $T=5$, and $\Delta x=0.04$, $\Delta t=0.0002$. We observe that all the methods keep the shape of the exact solution very well and the numerical energy appears bounded, see Figure \ref{ener_solution-CH2peakon}.  The numerical simulation shows that the global error is mainly due to the shape error, see Figure \ref{shape-phase-global error-CH2peakon}. When a coarser time grid and longer time integration is considered, $\Delta t=0.02$ and $T=100$, small wiggles appear in the solution of PDGM and Kahan's method, see Figure \ref{solution-CH-2peakon-Tkahan} (the left two figures). We increase $\Delta t$ to $0.2$, and observe that PDGM fails to preserve the shape of the solution, while Kahan's method can still keep a shape close to the exact solution even though also for this method the numerical dispersion increases, see Figure \ref{solution-CH-2peakon-Tkahan} (right).

\begin{figure}[h!]
\hspace{-10pt}
\centering
\vspace{10pt}
      \begin{subfigure}[b]{0.33\textwidth}
      \centering
                \includegraphics[width=0.99\textwidth]{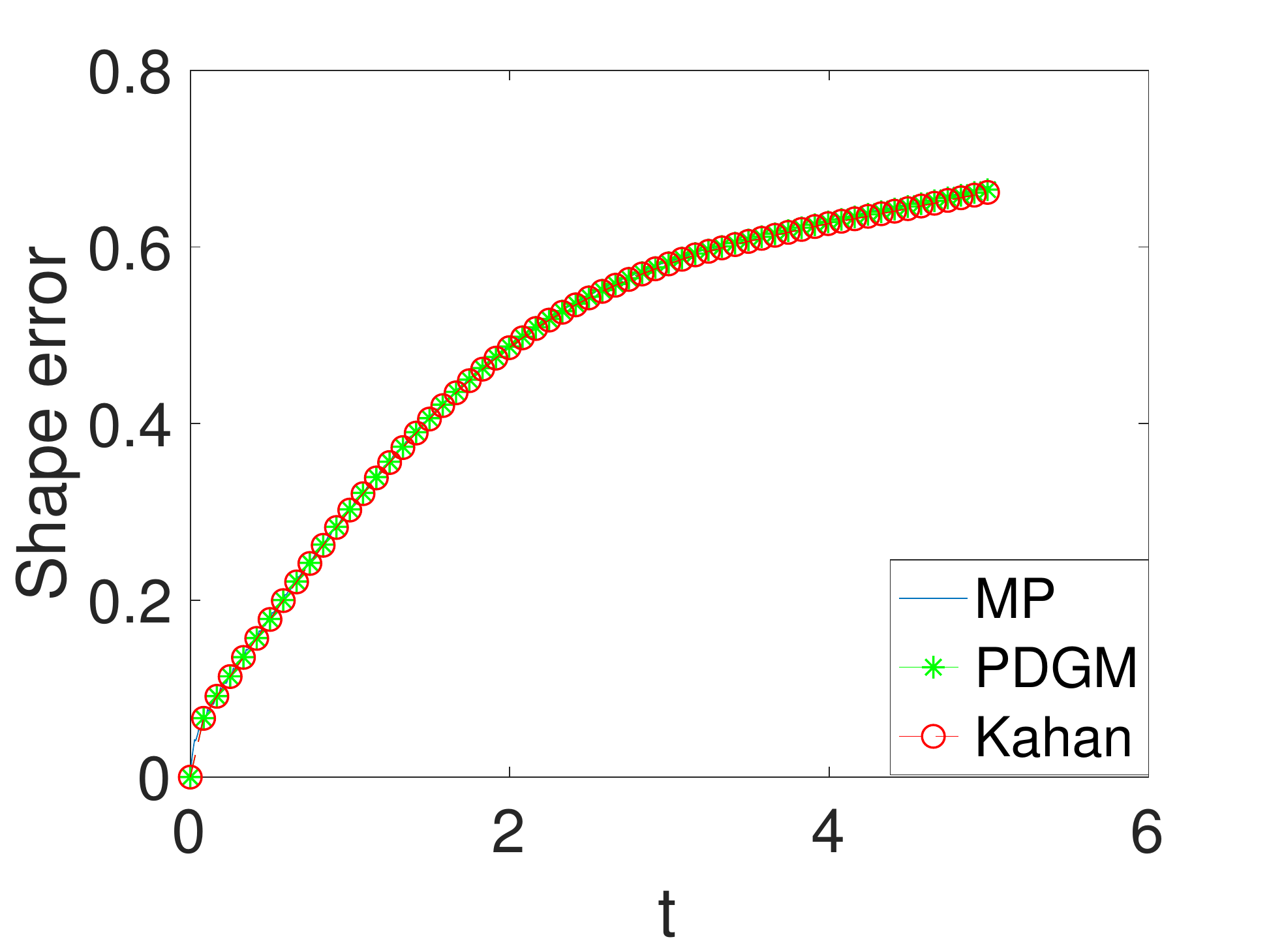}
        \end{subfigure}
        \begin{subfigure}[b]{0.33\textwidth}
        \centering
                \includegraphics[width=0.99\textwidth]{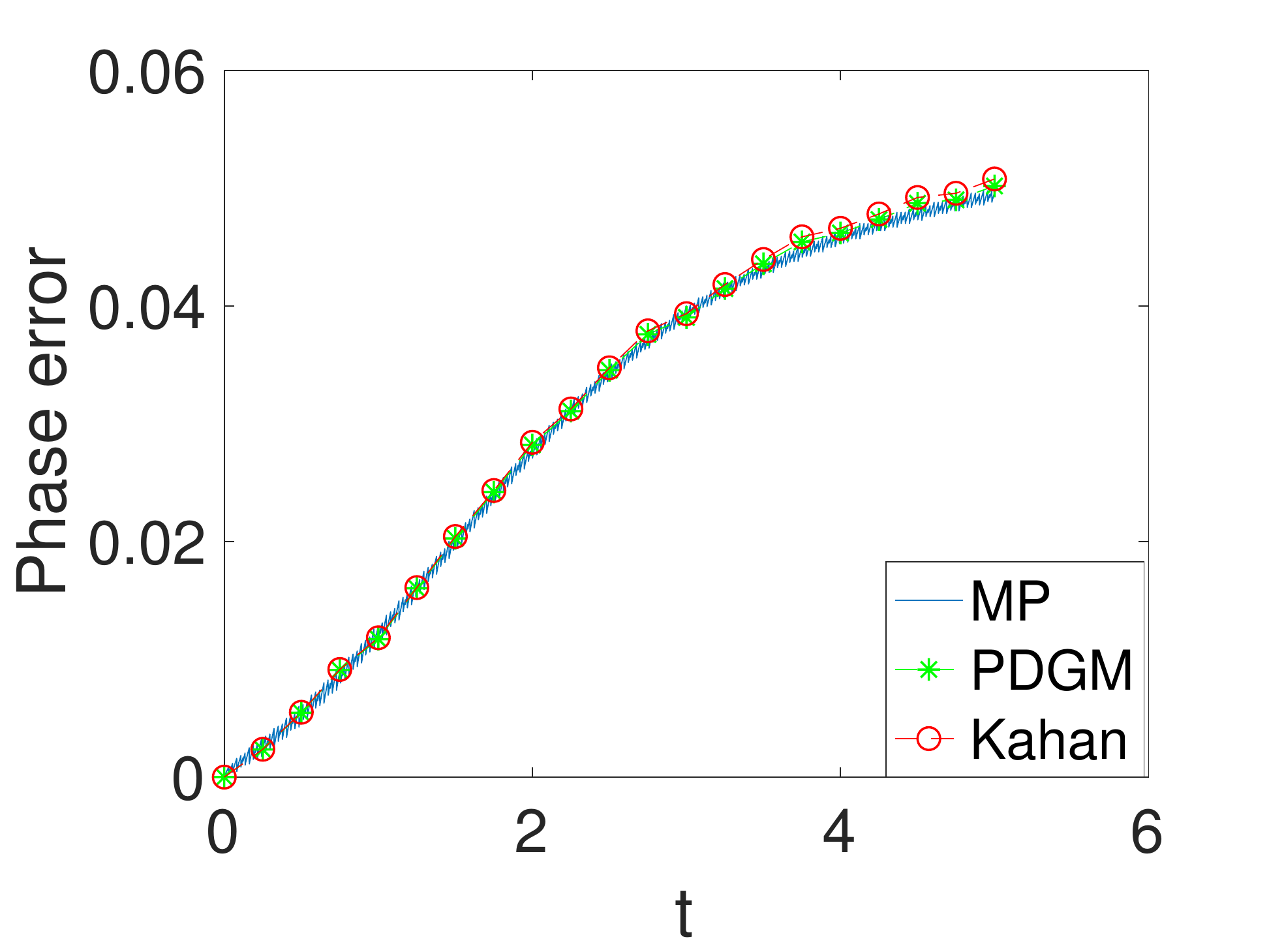}
        \end{subfigure}
 \begin{subfigure}[b]{0.33\textwidth}
      \centering
                \includegraphics[width=0.99\textwidth]{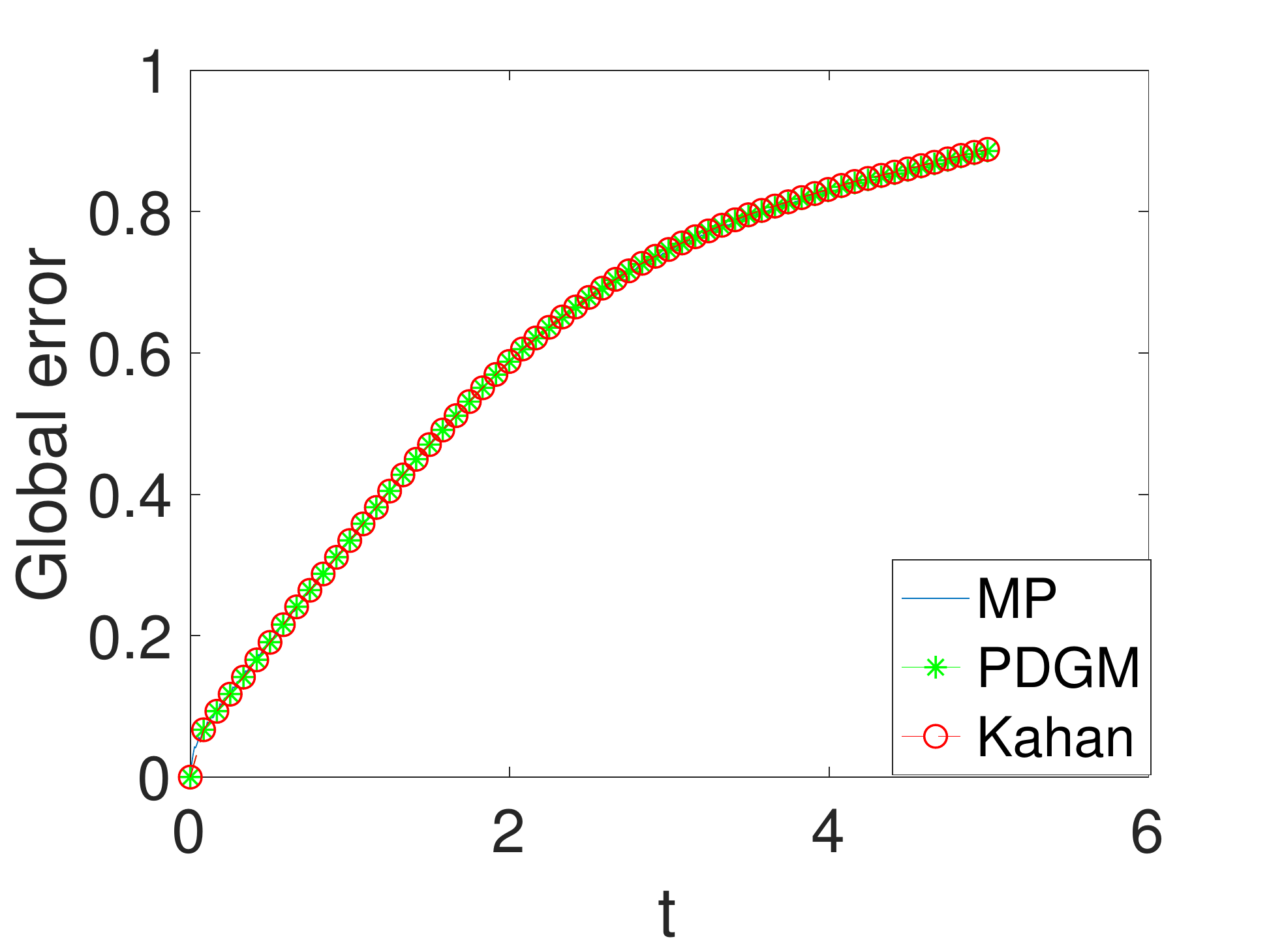}
        \end{subfigure}
      \caption{In this experiment, space step size $\Delta x=0.04$, time step size $\Delta t=0.0002$. \textbf{Left:} shape error, \textbf{middle:} phase error, \textbf{right:} global error.}\label{shape-phase-global error-CH2peakon}
\vspace{20pt}
\end{figure}
\vspace{-40pt}
\begin{figure}[h!]
\hspace{-10pt}
\centering
      \begin{subfigure}[b]{0.33\textwidth}
      \centering
                \includegraphics[width=0.99\textwidth]{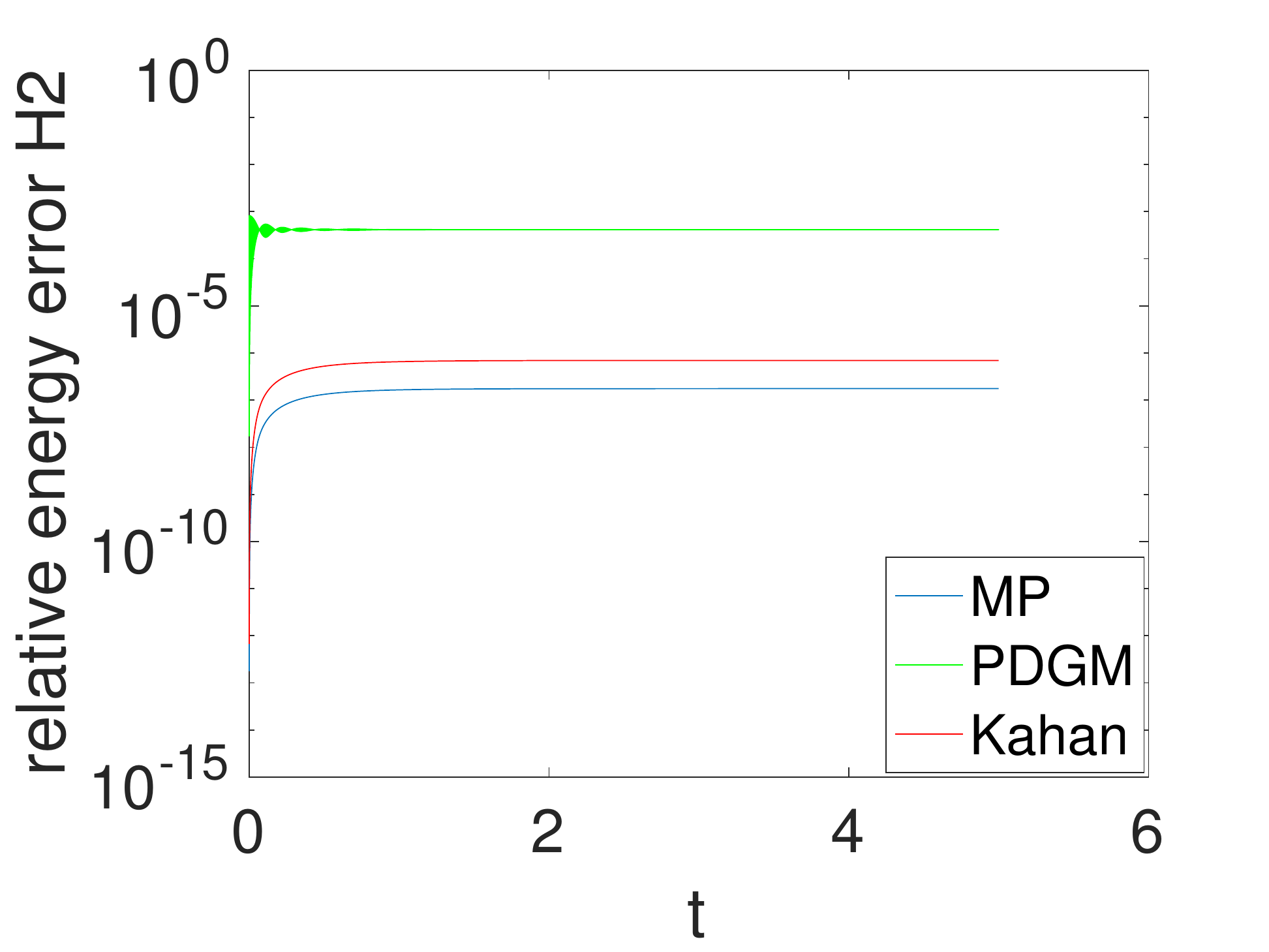}
        \end{subfigure}
        \begin{subfigure}[b]{0.33\textwidth}
        \centering
                \includegraphics[width=0.99\textwidth]{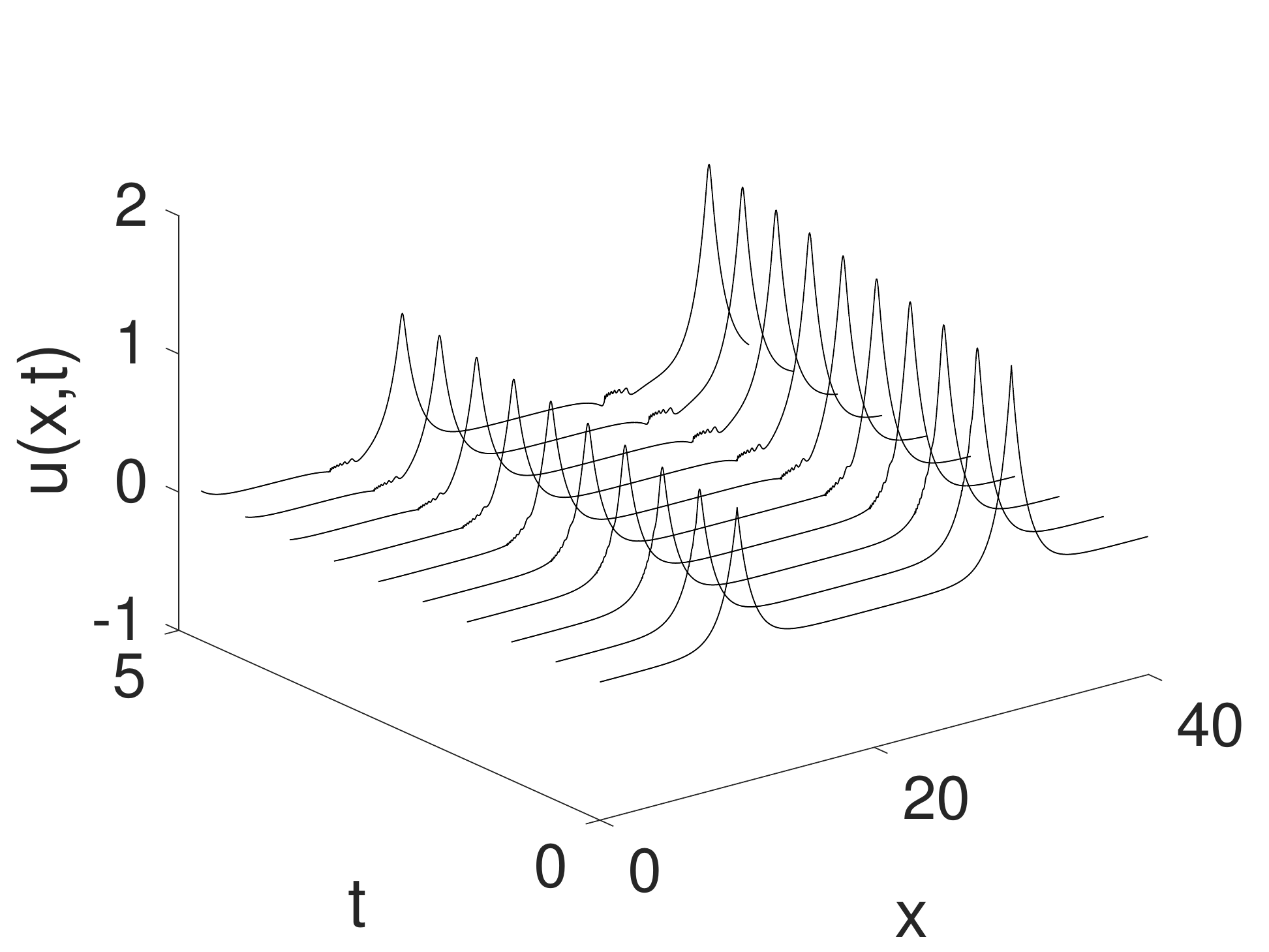}
        \end{subfigure}
 \begin{subfigure}[b]{0.33\textwidth}
      \centering
                \includegraphics[width=0.99\textwidth]{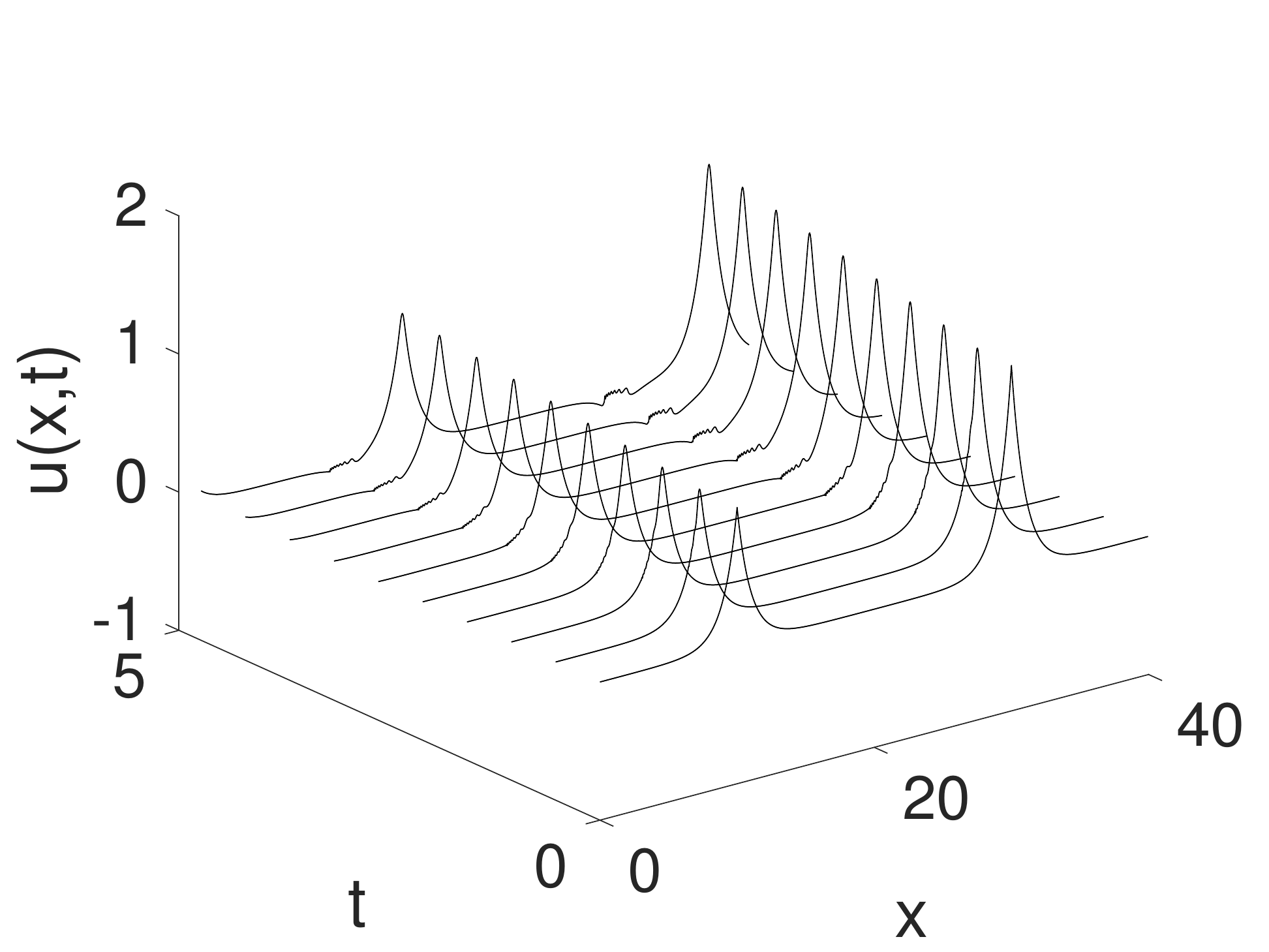}
        \end{subfigure}
      \caption{In this experiment, $\Delta x=0.04$, $\Delta t=0.0002$. \textbf{Left:} relative energy errors,  \textbf{middle:} propagation of the wave by PDGM, \textbf{right:} propagation of the wave by Kahan's method.}\label{ener_solution-CH2peakon}
\end{figure}

\begin{figure}[H]
\hspace{-10pt}
\centering
\vspace{10pt}
      \begin{subfigure}[b]{0.33\textwidth}
      \centering
                \includegraphics[width=0.99\textwidth]{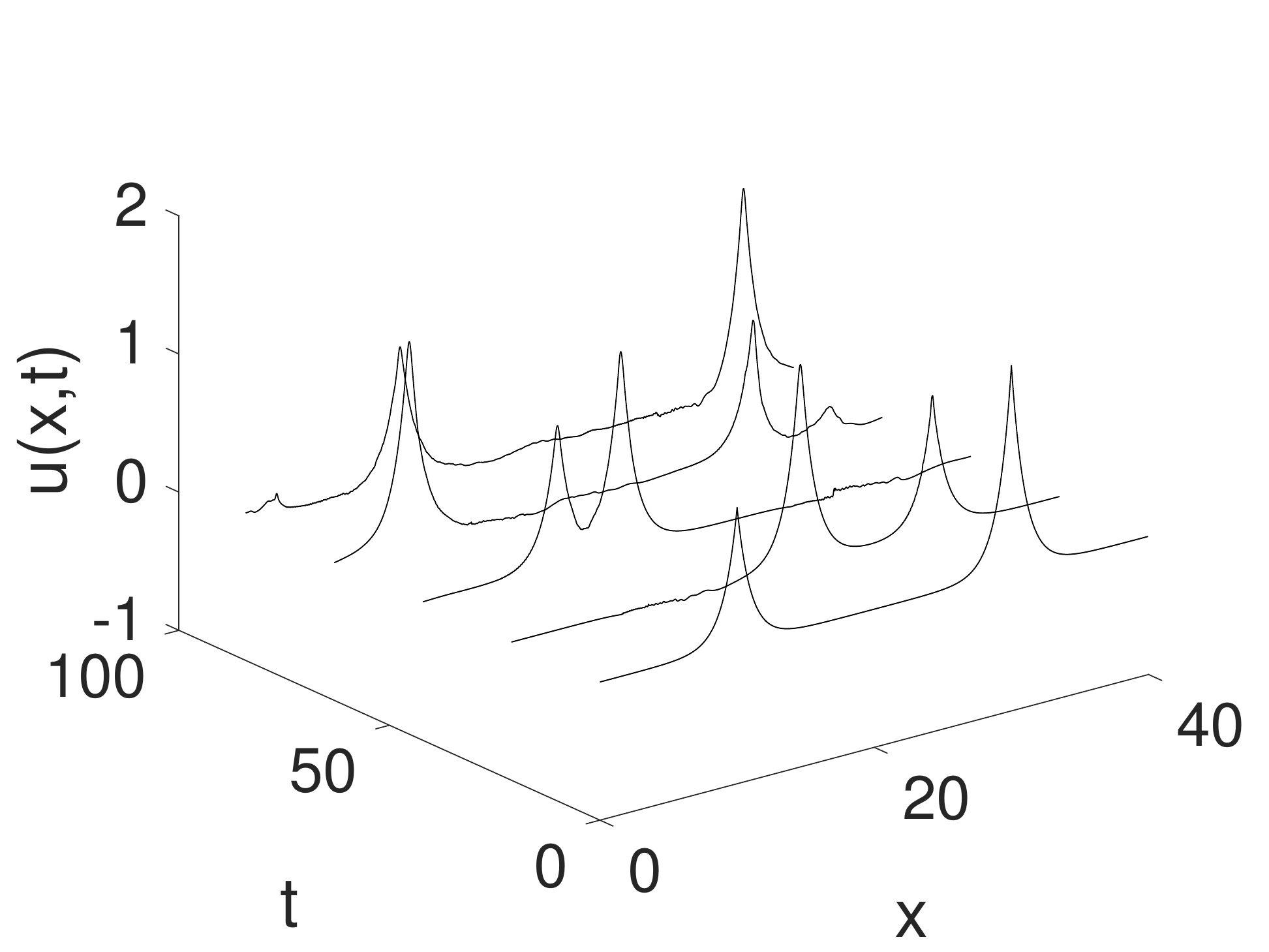}
        \end{subfigure}
        \begin{subfigure}[b]{0.33\textwidth}
        \centering
                \includegraphics[width=0.99\textwidth]{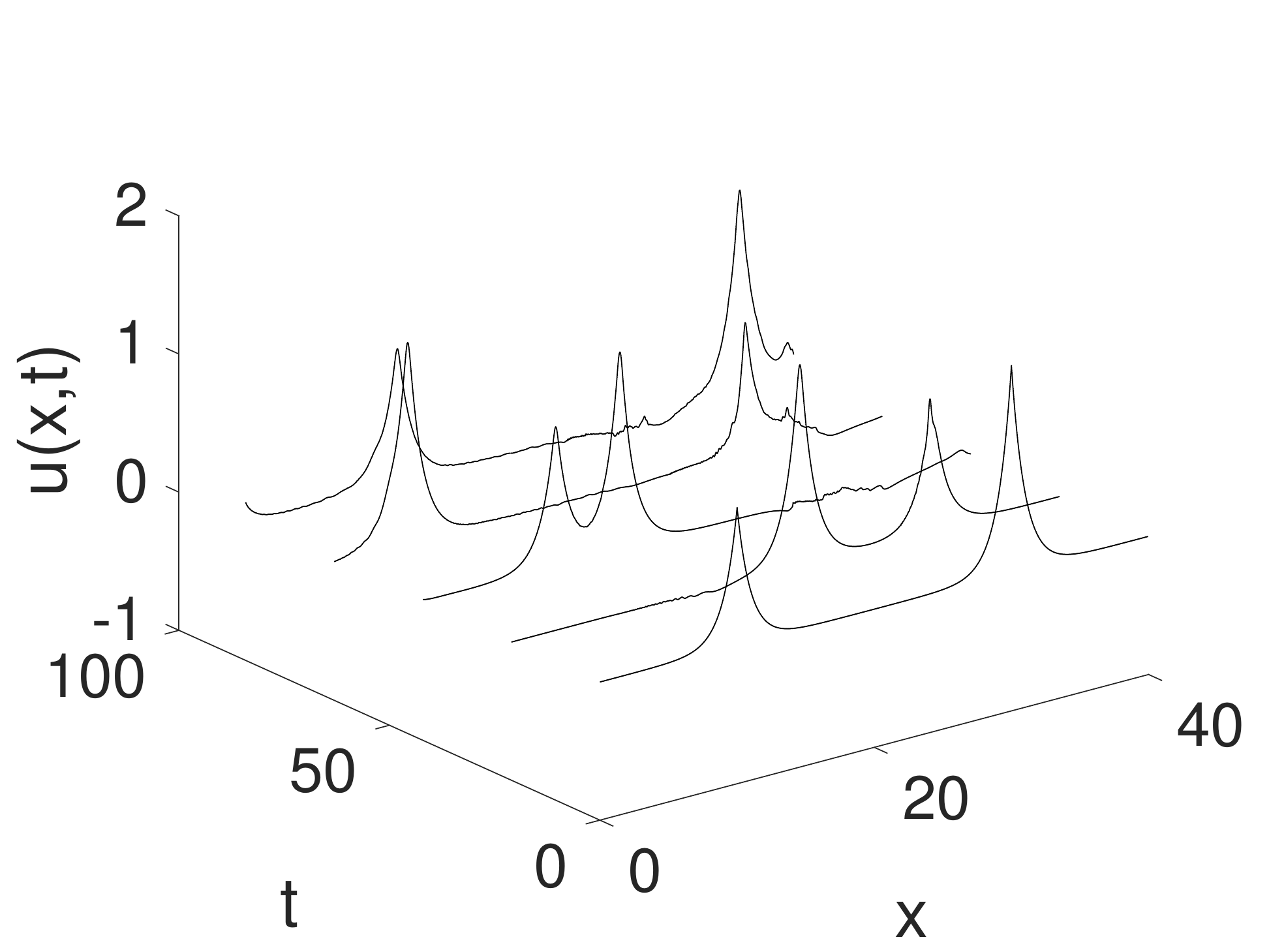}
        \end{subfigure}
 \begin{subfigure}[b]{0.33\textwidth}
      \centering
                \includegraphics[width=0.99\textwidth]{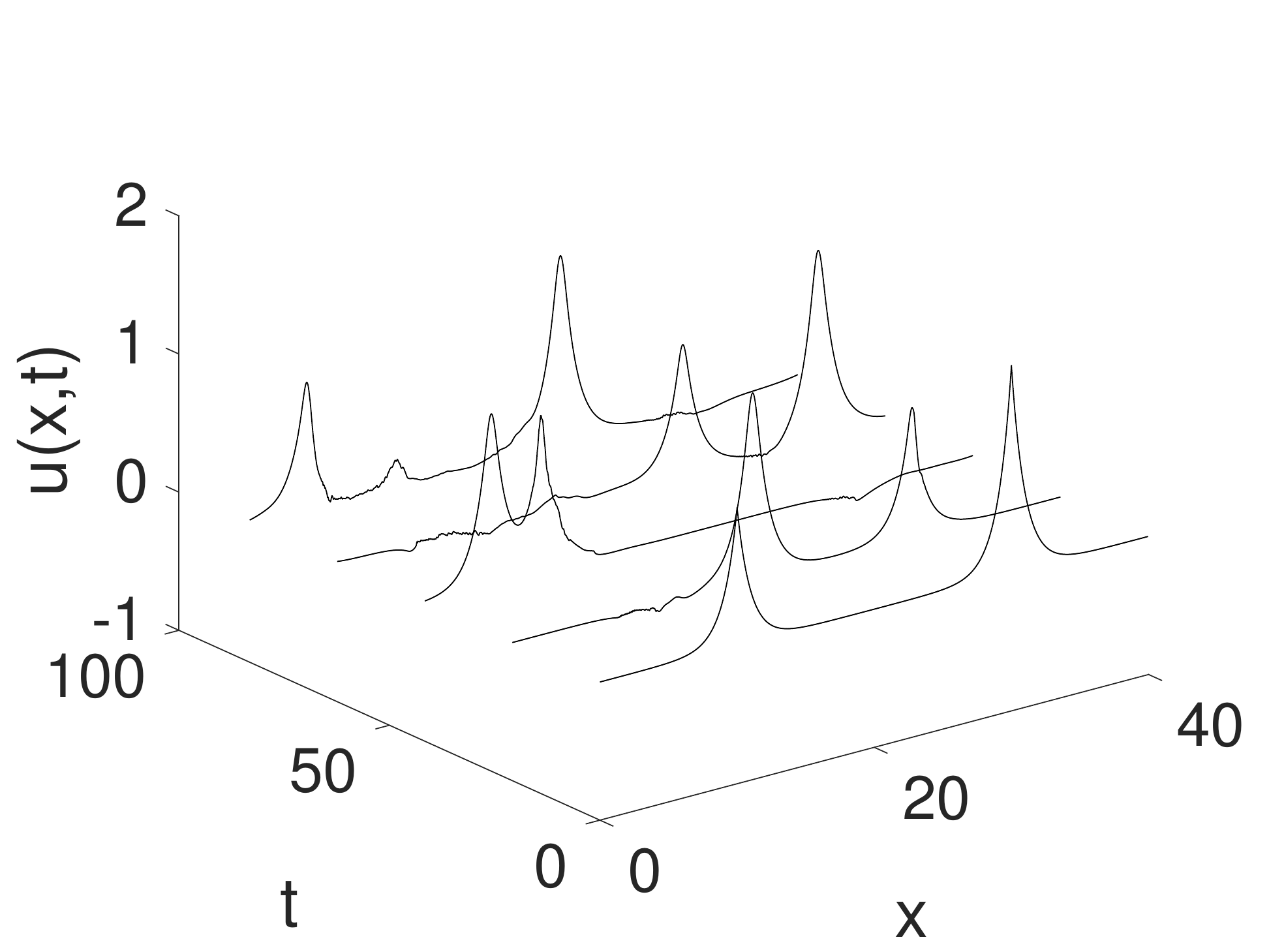}
        \end{subfigure}
      \caption{In this experiment, $\Delta x=0.04$. \textbf{Left:} propagation of the wave by PDGM, $\Delta t=0.02$, \textbf{middle:} propagation of the wave by Kahan's method, $\Delta t=0.02$, \textbf{right:} propagation of the wave by Kahan's method, $\Delta t=0.2$.}\label{solution-CH-2peakon-Tkahan}
\vspace{10pt}
\end{figure}

\subsection {Korteweg--de Vries equation} 

For the Camassa--Holm equation, the vector field of the semi-discretized system is a homogeneous quadratic polynomial. In this section, we deal with the KdV equation, for which the vector field of the semi-discretized equation is a non-homogeneous quadratic polynomial. Kahan's method has also previously been used for the temporal discretization of this equation, see \cite{kahan1997unconventional}.

The KdV equation
\begin{equation}\label{eq_kdv}
u_t + 6 u u_x + u_{xxx} = 0
\end{equation}
on the periodic domain $ \Sb := \RR / L \mathbb{Z} $ has the conserved Hamiltonians
\begin{align*}
\mathcal{H}_1 (u(t)) = \frac{1}{2} \int_{\Sb} u^2 \, \rd x, \qquad
\mathcal{H}_2 (u(t)) = \int_{\Sb} \left(- u^3 + \frac{1}{2} u_x^2 \right) \rd x.
\end{align*}
In the following we consider the variational form based on the Hamiltonian $\mathcal{H}_2$:
\begin{equation}\label{eq_kdv_E}
 u_t = \partial_x \frac{\delta \mathcal{H}_2}{\delta u }, \qquad \frac{\delta \mathcal{H}_2}{\delta u } = - 3 u^2 - u_{xx}.
\end{equation}

\subsubsection {Numerical schemes for the KdV equation}
We discretize the energy $\mathcal{H}_2$ for the KdV equation \eqref{eq_kdv_E} as
\begin{equation*}
H_2(U)\Delta x=\sum_{k=1}^K\left(-u_k^3+\frac{(\fd_x u_k)^2 + (\bd_x u_k)^2}{4}\right)\Delta x.
\end{equation*}
From simple calculations, the corresponding gradient is given by
\[  \nabla H_2 (U) = \left(-3 U_{\boldsymbol{\cdot}}^2 - \Dc[2]U \right), \] 
and thus we have the semi-discretized form for \eqref{eq_kdv_E}:
\begin{equation}\label{eq_kdv_E_dis}
 \dot{U} = \Dc\left( -3 U_{\boldsymbol{\cdot}}^2 -\Dc[2]U \right).
\end{equation}

Applying the schemes under consideration to \eqref{eq_kdv_E_dis} gives 
\begin{align}
\frac{U^{n+1}-U^n}{\Delta t}=&\Dc\nabla H_2(\frac{U^{n}+U^{n+1}}{2}), \qquad\text{(MP)}\label{semi-ode-KdV_Midpoint}\\
\begin{split}\label{semi-ode-KDV_Kahan_two}
\frac{U^{n+1}-U^n}{\Delta t}=&-\frac{1}{2}\Dc (\nabla H(U^n) + \nabla H(U^{n+1})) \\
&+ 2\Dc\nabla H(\frac{U^n+U^{n+1}}{2}),
\end{split}\qquad\text{(Kahan)}\\
\frac{U^{n+2}-U^n}{2\Delta t}=&\Dc\overline{\nabla} \tilde{H}_2 (U^{n},U^{n+1},U^{n+2}),\qquad\text{(PDGM)}\label{discreteE-KdV_polar}
\end{align}
where $H_2^{''}(U)=-6 \,\text{diag}(U)-\Dc[2]$ is the Hessian of $H_2(U)$ and $\overline{\nabla} \tilde{H}_2 (U^{n},U^{n+1},U^{n+2})$ is found as in Proposition \ref{pdg}, with polarised discrete energy 
\begin{align*}
\tilde{H}_2 (u_k^n,u_k^{n+1}) \Delta x := &\sum_{k=1}^K ( -u_k^{n} u_k^{n+1} \frac{u_k^{n}  + u_k^{n+1} }{2} + \frac{a}{2} ( \fd_x u_k^{n}) (\fd_x u_k^{n+1}) \\
&+\frac{1-a}{2} \frac{ (\fd_x u_k^{n})^2 + ( \fd_x u_k^{n+1})^2 }{2} ) \Delta x.
\end{align*}
\begin{remark}
We perform several numerical simulations to find a good choice of the parameter $a$, and we take $a=-\frac{1}{2}$ for PDGM in the following numerical examples for the KdV equation.
\end{remark}

\subsubsection {Stability analysis of the schemes}
To analyse the stability of the above methods, we perform the von Neumann stability analysis for the Kahan and PDGM schemes applied to the linearized form of the KdV equation \eqref{eq_kdv}
\begin{align}\label{eq_kdv_linearized eq}
u_t+ u_{xxx} = 0.
\end{align}

The equation for the amplification factor for Kahan's method is
\begin{align*}
(1+i\lambda(\cos\theta-1)\sin\theta)g+i\lambda (\cos\theta-1)\sin\theta-1=0,
\end{align*}
and its root is
\begin{align*}
g=\frac{1-i\lambda (\cos\theta-1)\sin\theta}{1+i\lambda (\cos\theta-1)\sin\theta},
\end{align*}
where $\lambda:=\frac{\Delta t}{{\Delta x}^3}$.
Since $g$ is a simple root on the unit circle, Kahan's method is unconditionally stable for the linearized KdV equation.

The equation for the amplification
factor for PDGM is
\begin{align}\label{LMI_stabi_linearized KdV}
g^2-1+i\lambda(3g^2-2g+3)(\cos\theta-1)\sin\theta=0.
\end{align}
The two roots of the above equation are thus
\begin{align*}
g_{1}&=\frac{3b^2+\sqrt{1+8b^2}+ib(3\sqrt{1+8b^2}-1)}{1+9b^2},\\
g_{2}&=\frac{3b^2-\sqrt{1+8b^2}-ib(3\sqrt{1+8b^2}+1)}{1+9b^2},
\end{align*}
where $b=\lambda (1-\cos\theta)\sin\theta$. We observe that $\lvert g_1\rvert=\lvert g_2\rvert=1$, and $g_1 \neq g_2$, therefore PDGM is unconditionally stable for the linearized KdV equation.

\subsubsection {Numerical tests for the KdV equation}\label{Numericla example kdv}

\noindent \textbf{Example 1 (One soliton solution):} Consider the initial value
$$u(x,0)=2 \, \text{sech}^2 (x-L/2),$$
where $x\in[0,L]$, $L=40$. We apply our schemes over the time interval $[0,T]$, $T=100$, with step sizes $\Delta x=0.05$, $\Delta t=0.0125$. From our observations, all the methods behave well. The shape of the wave is well kept by all the methods, also for long time integration, see Figure \ref{shape-phase-global error-kdv-1soliton}. 
The energy errors of all the methods are rather small and do not increase over long time integration, see Figure \ref{energy error-solution-kdv-1soliton} (left). 
We then use a coarser time grid, $\Delta t=0.035$, and both methods are still stable, see Figure \ref{solution-kdv-1soliton-TKahan-LMI} (left two). However we observe that the global error of PDGM becomes much bigger than that of Kahan's method. When an even larger time step-size, $\Delta t=0.04$, is considered, the solution for PDGM blows up while the solution for Kahan's method is rather stable. In this case, the PDG method applied to the nonlinear KdV equation is unstable and the numerical solution blows up at around $t=8$. Even if we increase the time step-size to $\Delta t=0.1$, Kahan's method still works well, see Figure \ref{solution-kdv-1soliton-TKahan-LMI} (middle). When $\Delta t=0.15$ is considered, we observe evident signs of instability in the solution of Kahan's method. The solution will blow up rapidly when $\Delta t=0.2\gg\Delta x$.

\begin{figure}[h]
\vspace{-10pt}
\hspace{-10pt}
\centering
      \begin{subfigure}[b]{0.33\textwidth}
      \centering
                \includegraphics[width=0.99\textwidth]{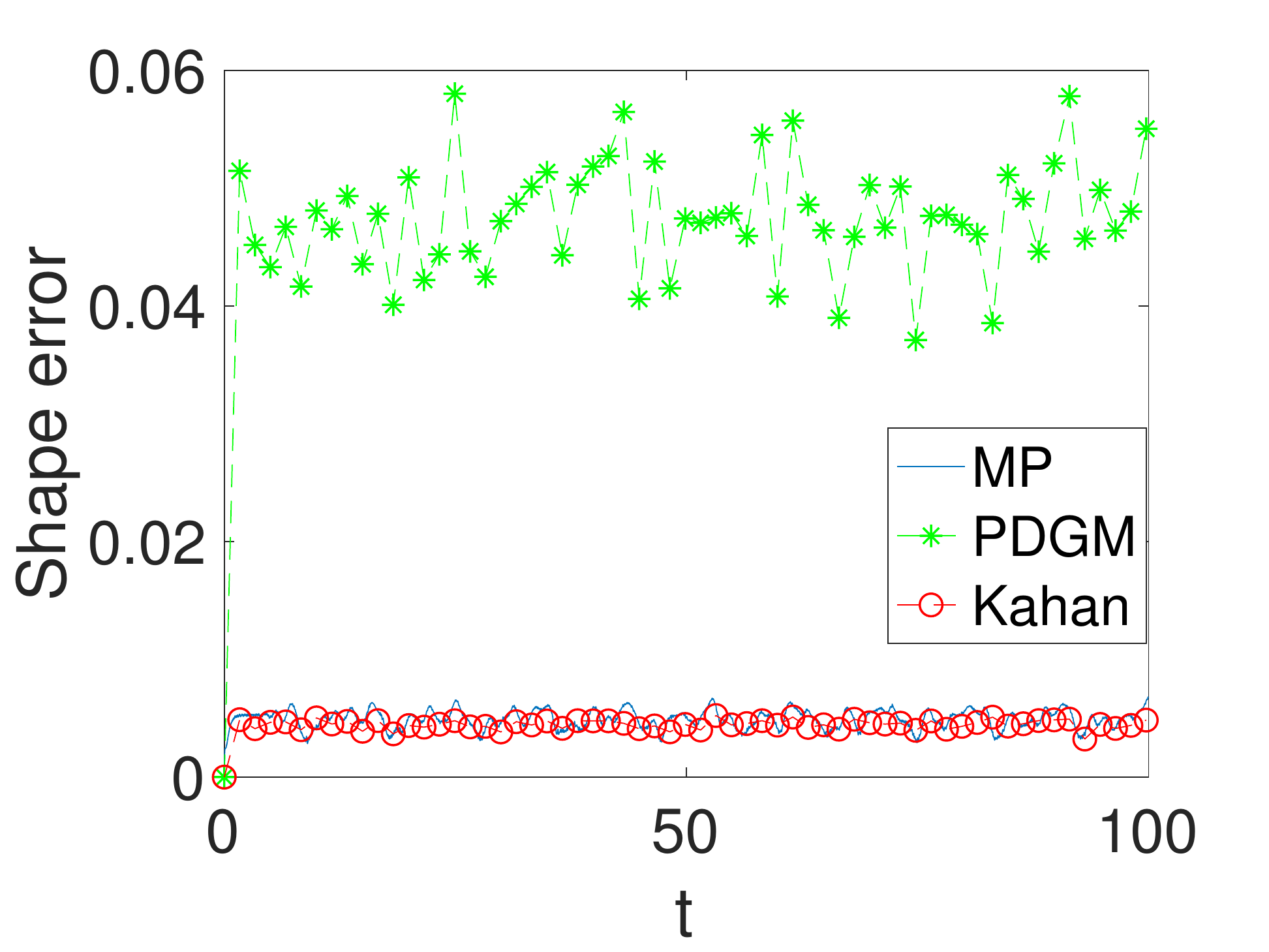}
        \end{subfigure}
        \begin{subfigure}[b]{0.33\textwidth}
        \centering
                \includegraphics[width=0.99\textwidth]{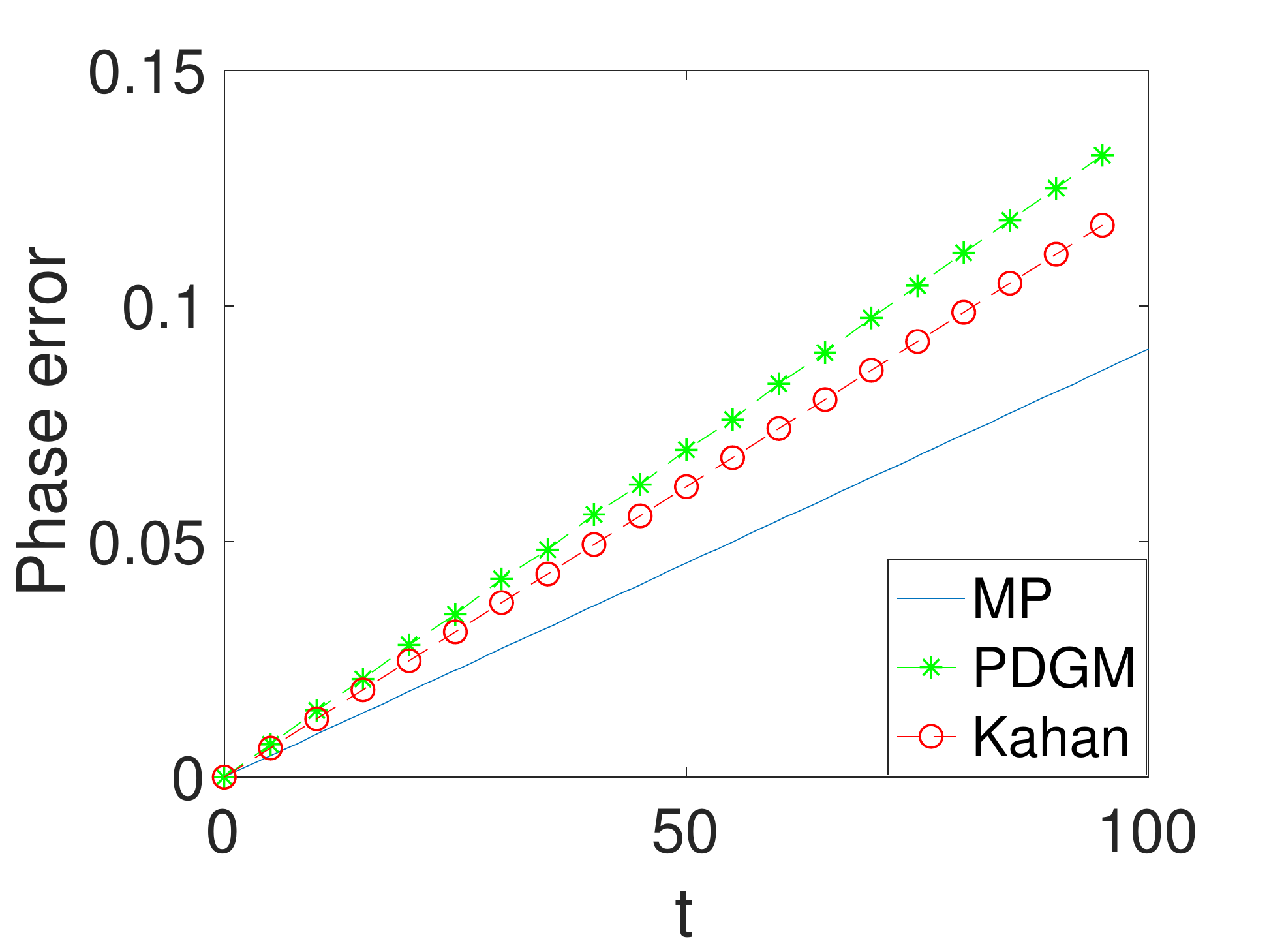}
        \end{subfigure}
 \begin{subfigure}[b]{0.33\textwidth}
      \centering
                \includegraphics[width=0.99\textwidth]{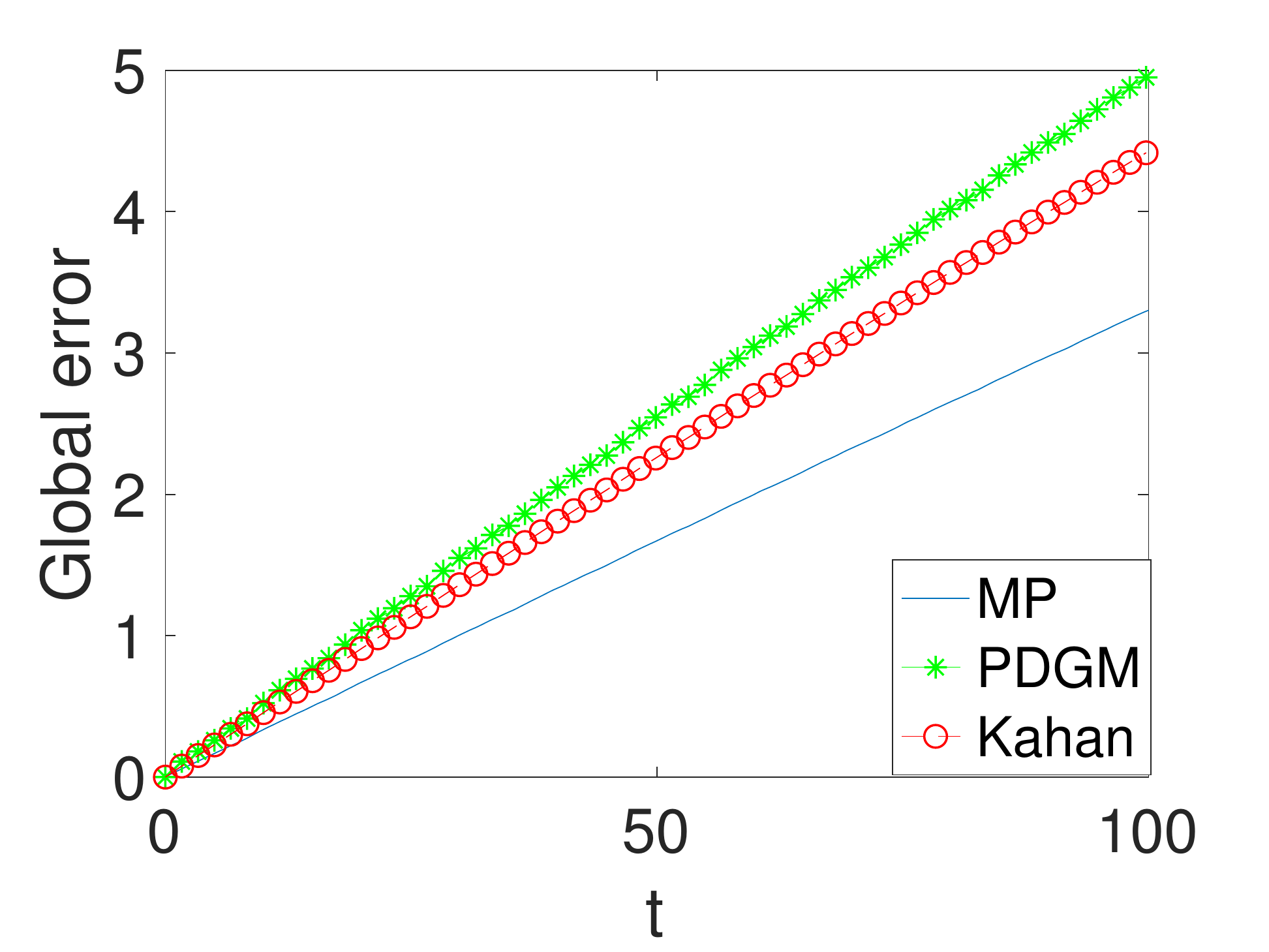}
        \end{subfigure}
      \caption{Space step size $\Delta x=0.05$, time step size $\Delta t=0.0125$. \textbf{Left:} shape error, \textbf{middle:} phase error, \textbf{right:} global error.}\label{shape-phase-global error-kdv-1soliton}
\vspace{10pt}
\end{figure}
\begin{figure}[h]
\hspace{-10pt}
\centering
\vspace{10pt}
      \begin{subfigure}[b]{0.33\textwidth}
      \centering
                \includegraphics[width=0.99\textwidth]{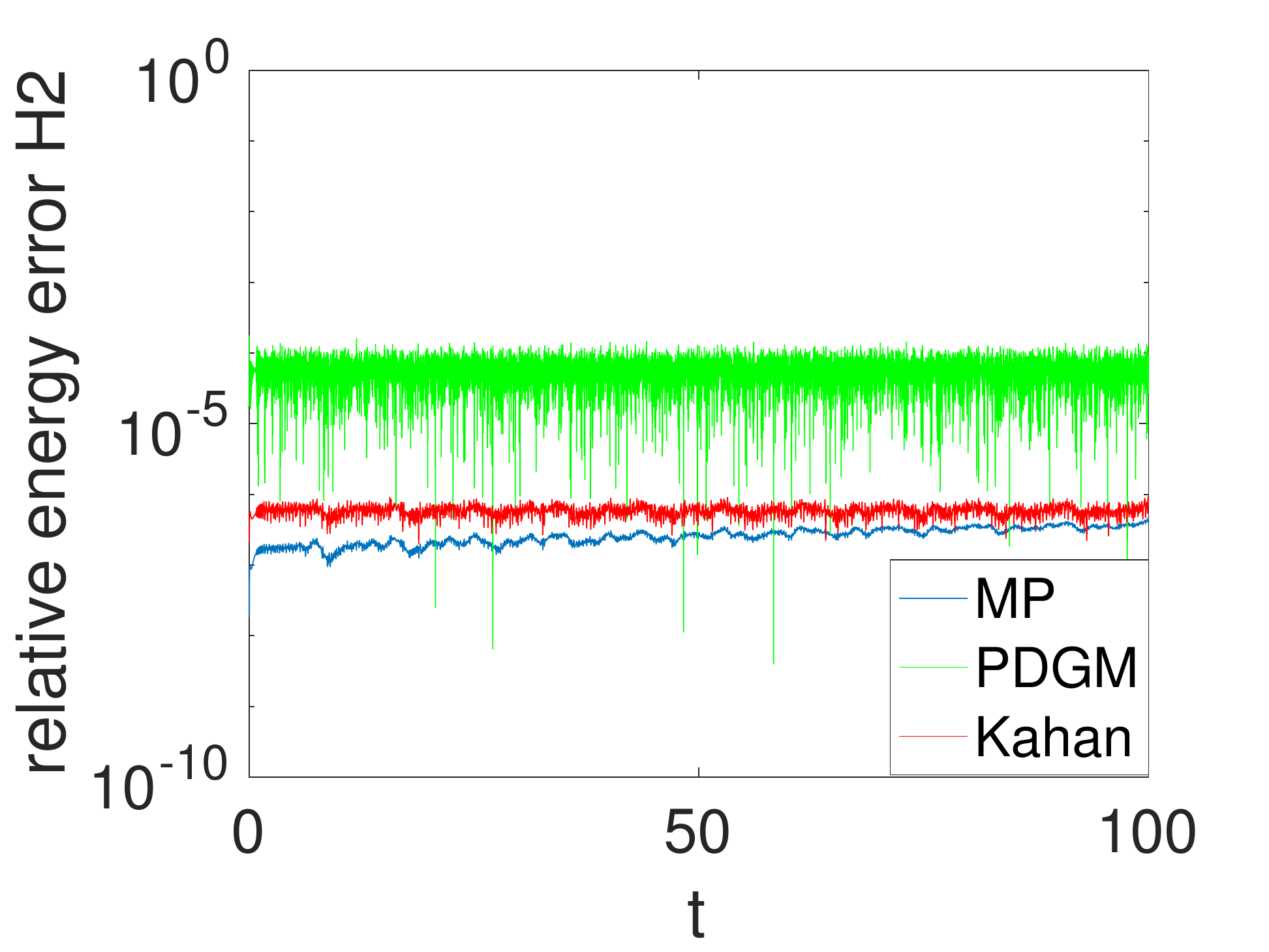}
        \end{subfigure}
        \begin{subfigure}[b]{0.33\textwidth}
        \centering
                \includegraphics[width=0.99\textwidth]{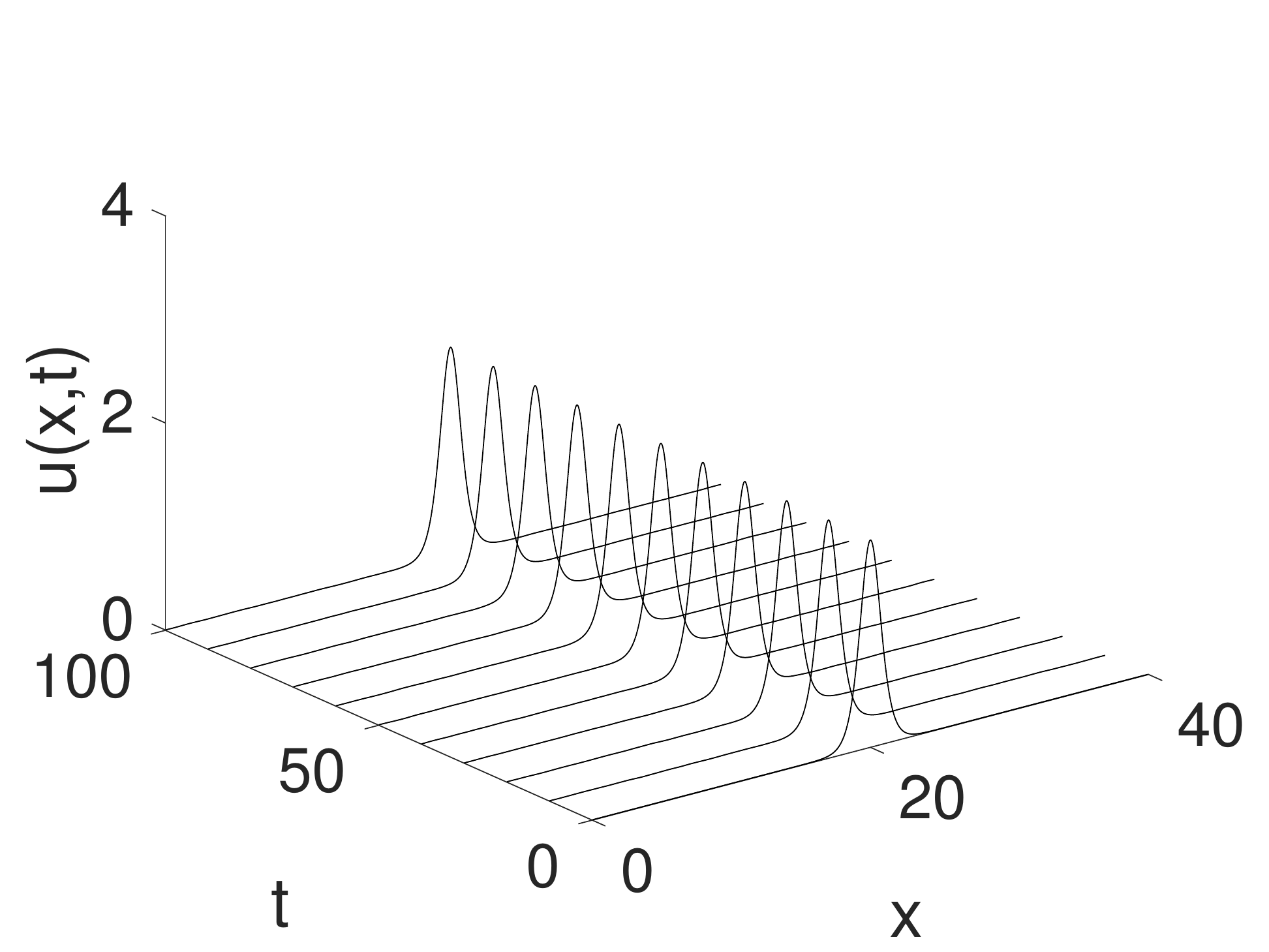}
        \end{subfigure}
 \begin{subfigure}[b]{0.33\textwidth}
      \centering
                \includegraphics[width=0.99\textwidth]{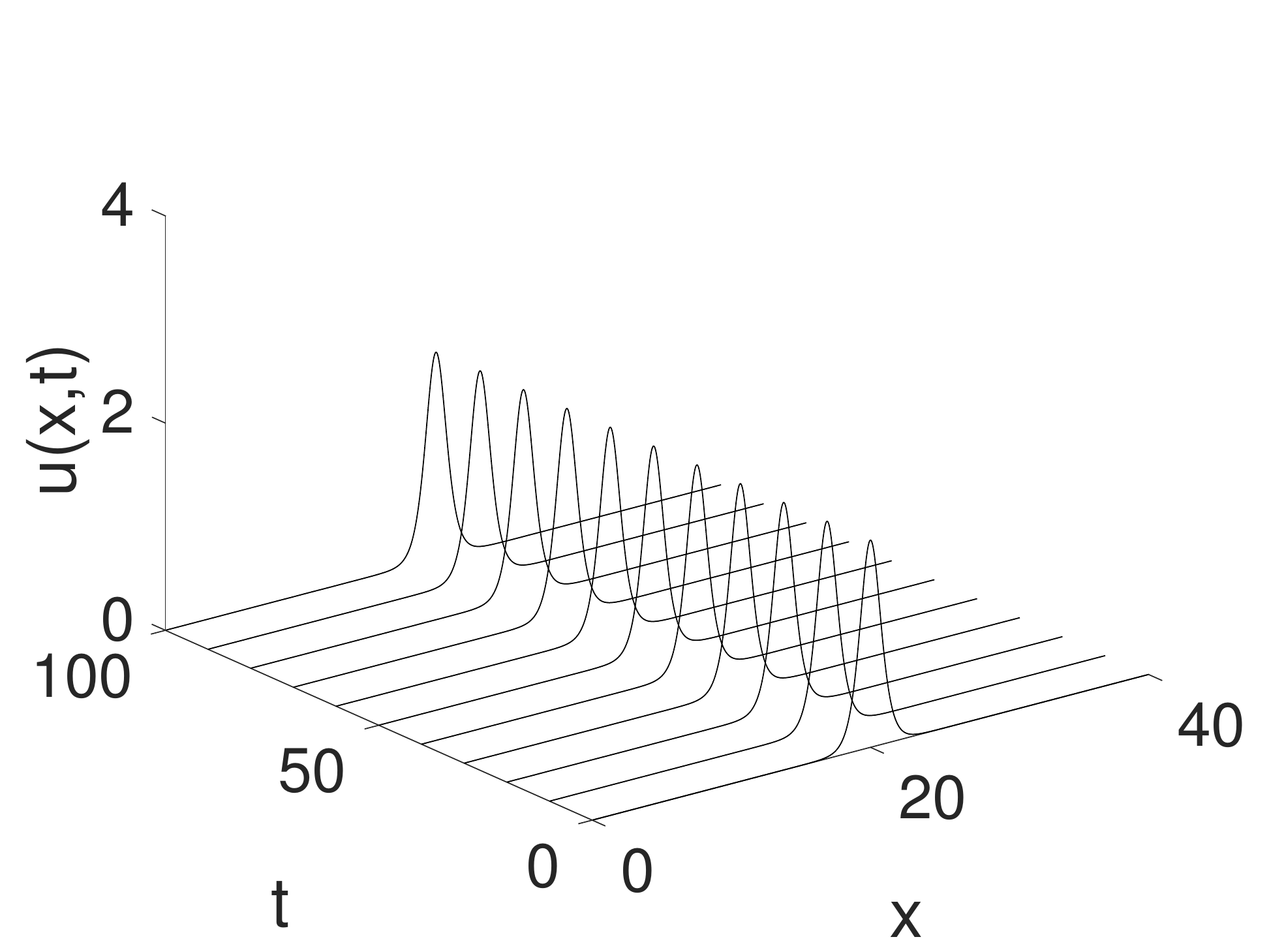}
        \end{subfigure}
      \caption{With $\Delta x=0.05$, $\Delta t=0.0125$. \textbf{Left:} relative energy errors, \textbf{right two:} propagation of the wave by PDGM and Kahan's method.}\label{energy error-solution-kdv-1soliton}
\vspace{10pt}
\end{figure}

\begin{figure}[h]
\vspace{-10pt}
\hspace{-10pt}
\centering
      \begin{subfigure}[b]{0.33\textwidth}
      \centering
                \includegraphics[width=0.99\textwidth]{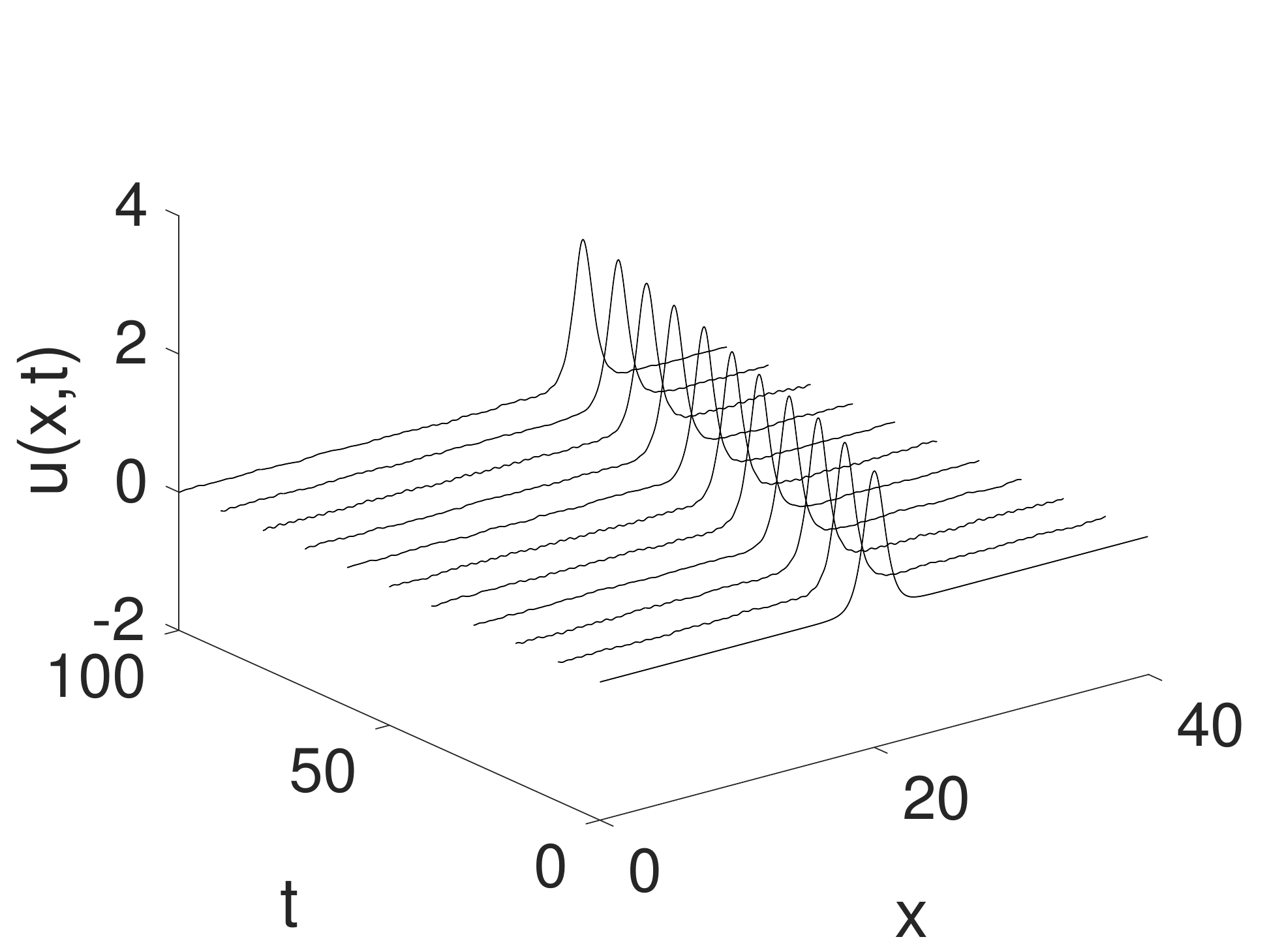}
        \end{subfigure}
        \begin{subfigure}[b]{0.33\textwidth}
        \centering
                \includegraphics[width=0.99\textwidth]{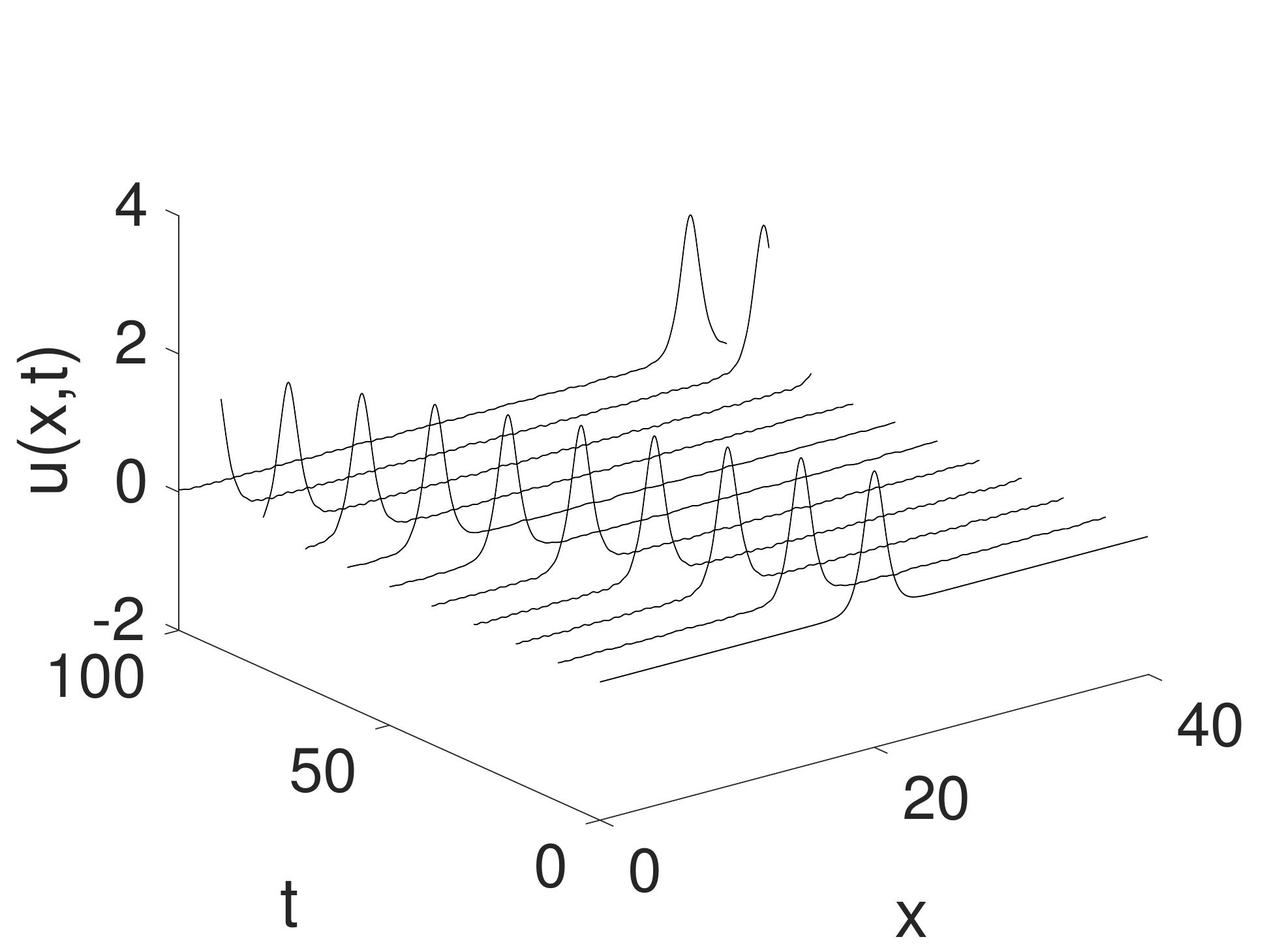}
        \end{subfigure}
        \begin{subfigure}[b]{0.33\textwidth}
      \centering
                \includegraphics[width=0.99\textwidth]{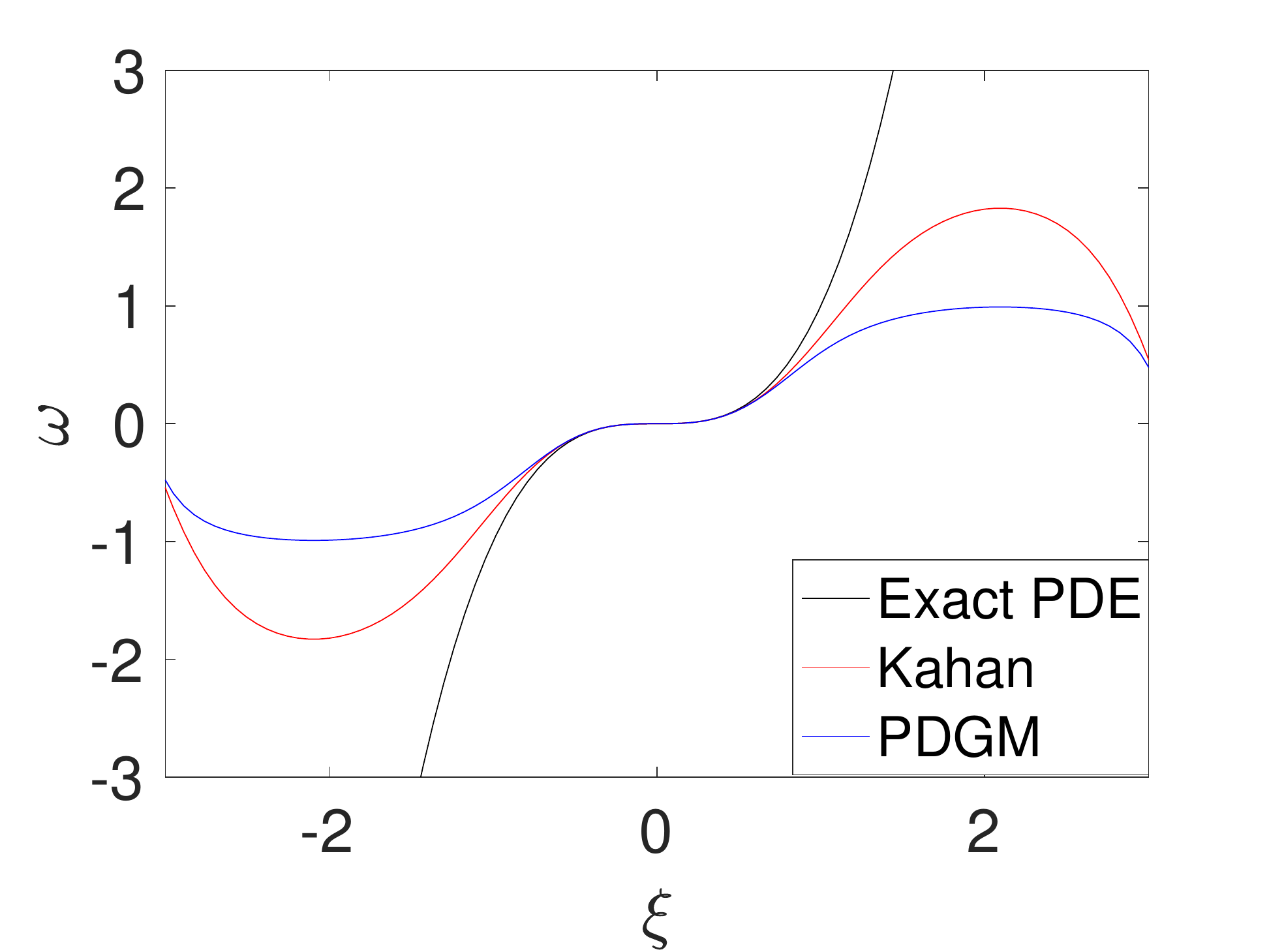}
        \end{subfigure}
      \caption{With $\Delta x=0.05$. \textbf{Left:} $\Delta t=0.035$, propagation of the wave by PDGM, \textbf{middle:} $\Delta t=0.1$, propagation of the wave by Kahan's method, \textbf{right:} dispersion relation for $\lambda=1$.}\label{solution-kdv-1soliton-TKahan-LMI}
\vspace{10pt}
\end{figure}

\vspace{8pt}
\noindent \textbf{Example 2 (Two solitons solution):} We choose initial value
$$u(x,0)=6\,\text{sech}^2x,$$
and consider periodic boundary conditions $u(0,t)=u(L,t)$, where $x\in[0,L]$, $L=40$. We set the space step size $\Delta x=0.05$ and apply the aforementioned schemes on time interval $[0,T]$ with $T=100$, $\Delta t=0.001$. All the methods behave stably. The profiles of Kahan's method and the midpoint method are almost indistinguishable, and the profiles for the midpoint method are thus not presented here. Kahan's method and PDGM preserve the modified energy, and accordingly the energy error of all the methods are rather small over long time integration, see Figure \ref{energy-solution-kdv-2soliton} (left). After a short while the solution has two solitons; one is tall and the other is shorter, see Figure \ref{energy-solution-kdv-2soliton} (the right two plots).  

When we consider a coarser time grid, $\Delta t=0.00375$, both methods are still stable, see Figure \ref{solution-kdv-2soliton-TKahan-LMI} (the left two plots). However, there appear more small wiggles in the solution by PDGM and we observe that the solution of PDGM will blow up rather soon, around $t=1$, for an even coarser time grid $\Delta t=0.005$. When we increase the time step size to $\Delta t=0.0125$ and consider $T=100$, the shape of the exact solution is still well preserved by Kahan's method, even though there appear some small wiggles in the solution at around $t=100$. We observe that the solution of Kahan's method will blow up when $\Delta t=0.05$ is considered. Similar experiments as in this subsection, but for the multisymplectic box schemes, can be found in a paper by Ascher and McLachlan \cite{ascher2005symplectic}. However, here we consider even coarser time grid than there, and the numerical results show that Kahan's method is quite stable, even though it is linearly implicit.

\begin{figure}[H]
\hspace{-10pt}
\centering
\vspace{10pt}
      \begin{subfigure}[b]{0.33\textwidth}
      \centering
                \includegraphics[width=0.99\textwidth]{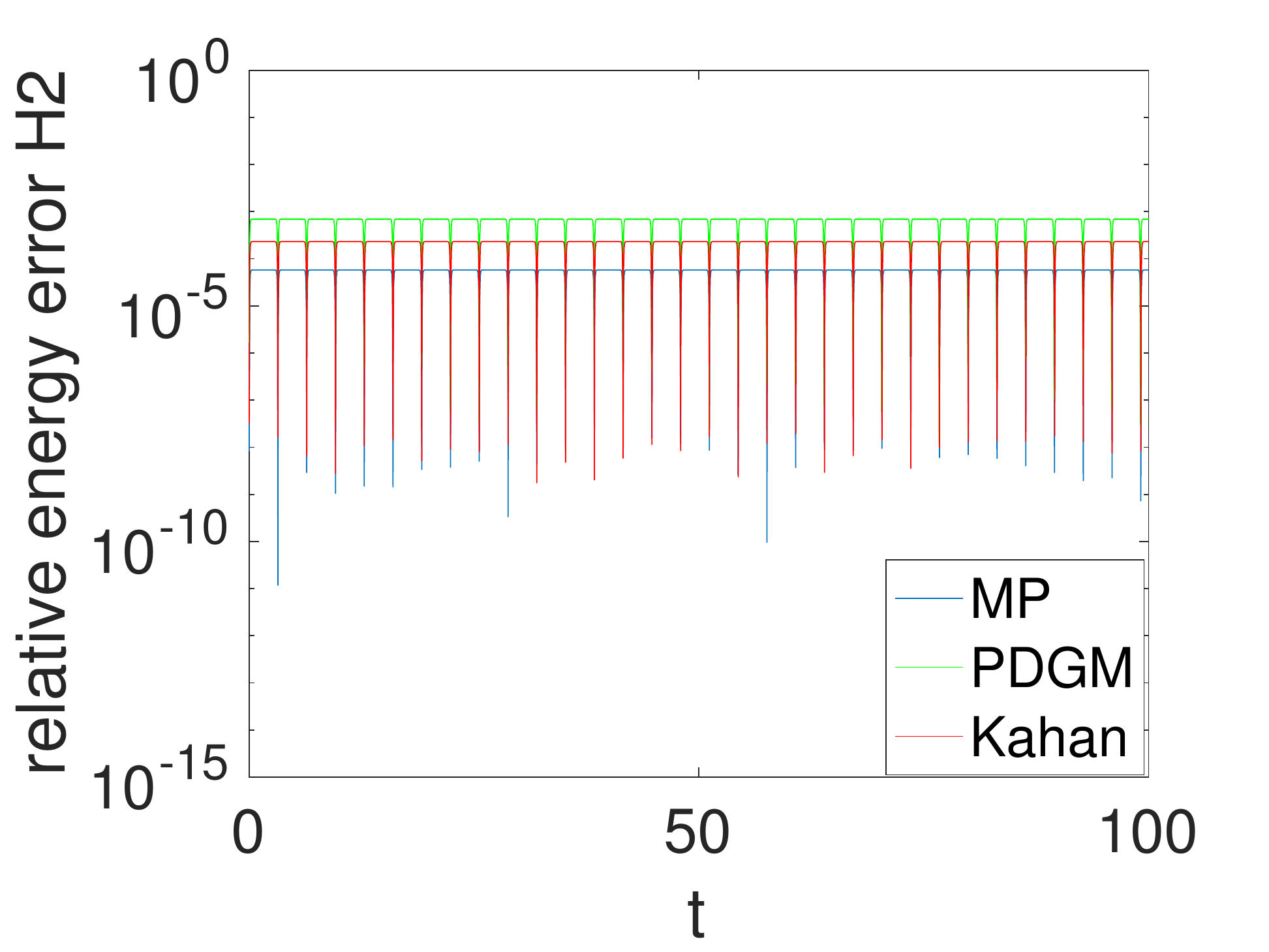}
        \end{subfigure}
        \begin{subfigure}[b]{0.33\textwidth}
        \centering
                \includegraphics[width=0.99\textwidth]{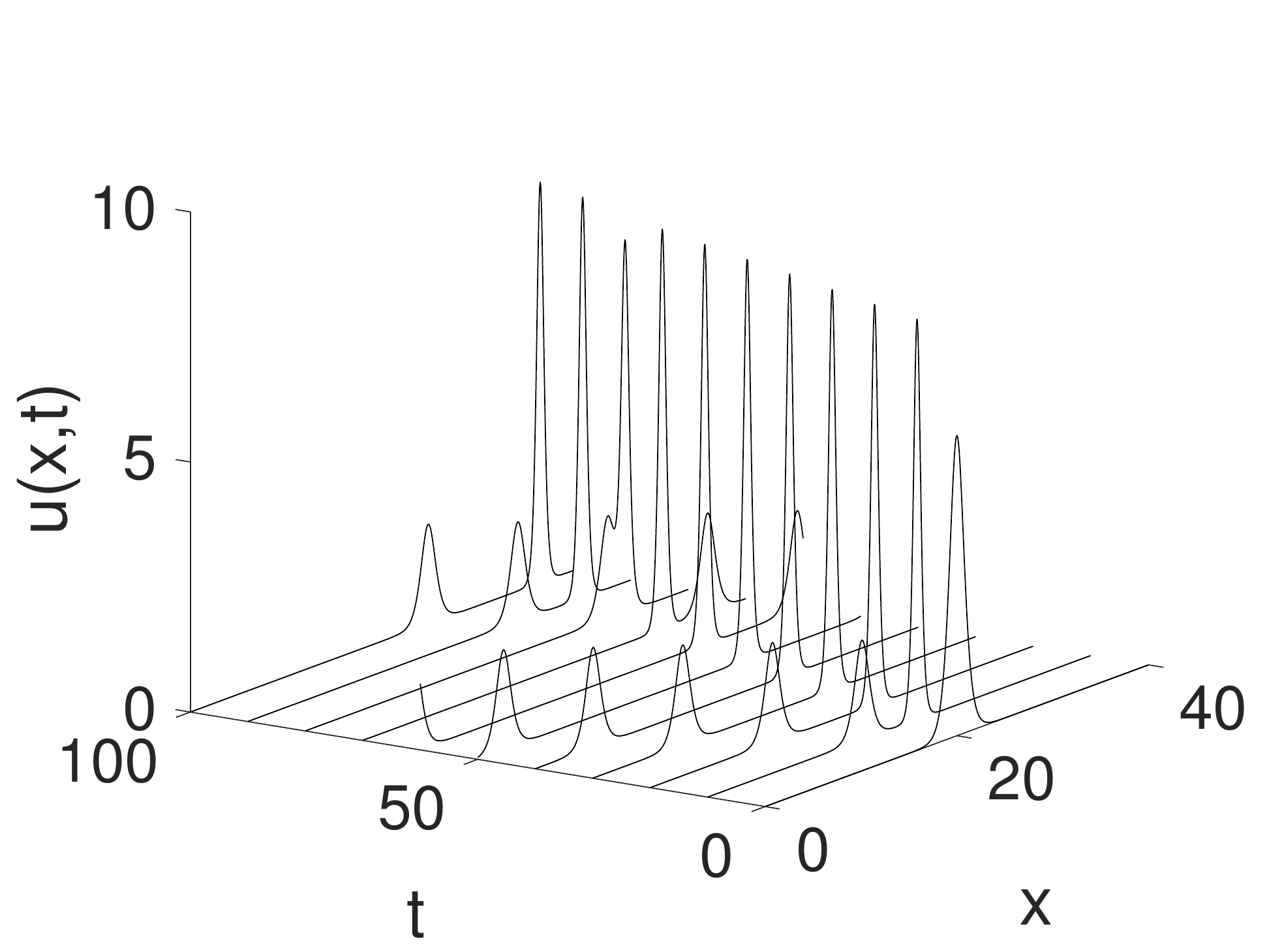}
        \end{subfigure}
 \begin{subfigure}[b]{0.33\textwidth}
      \centering
                \includegraphics[width=0.99\textwidth]{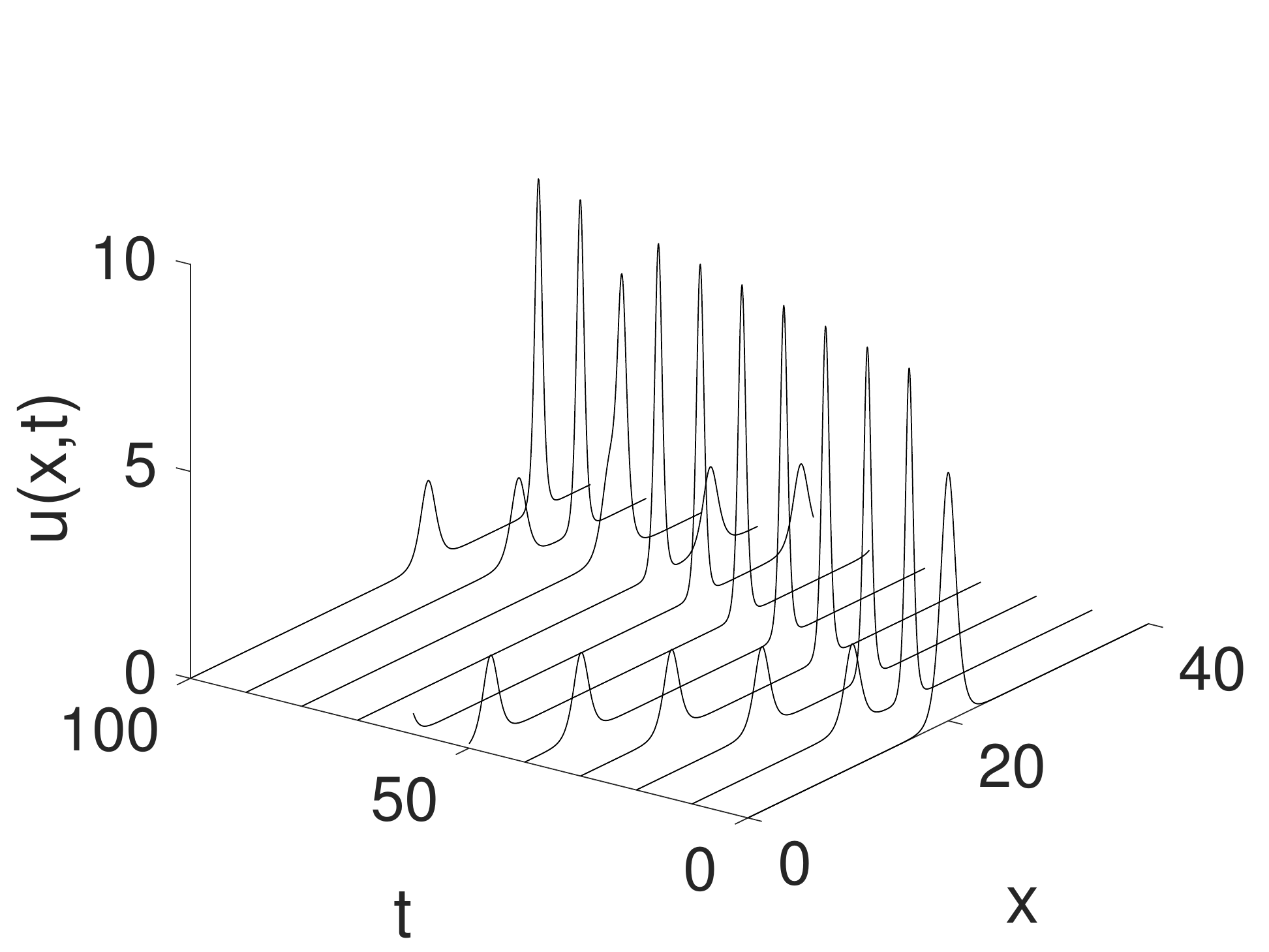}
        \end{subfigure}
      \caption{In this experiment, $\Delta x=0.05$, $\Delta t=0.001$. \textbf{Left:} relative energy errors, \textbf{right two:} propagation of the wave by PDGM and Kahan's method.}\label{energy-solution-kdv-2soliton}
\vspace{10pt}
\end{figure}

\begin{figure}[h!]
\hspace{-10pt}
\centering
\vspace{10pt}
      \begin{subfigure}[b]{0.33\textwidth}
      \centering
                \includegraphics[width=0.99\textwidth]{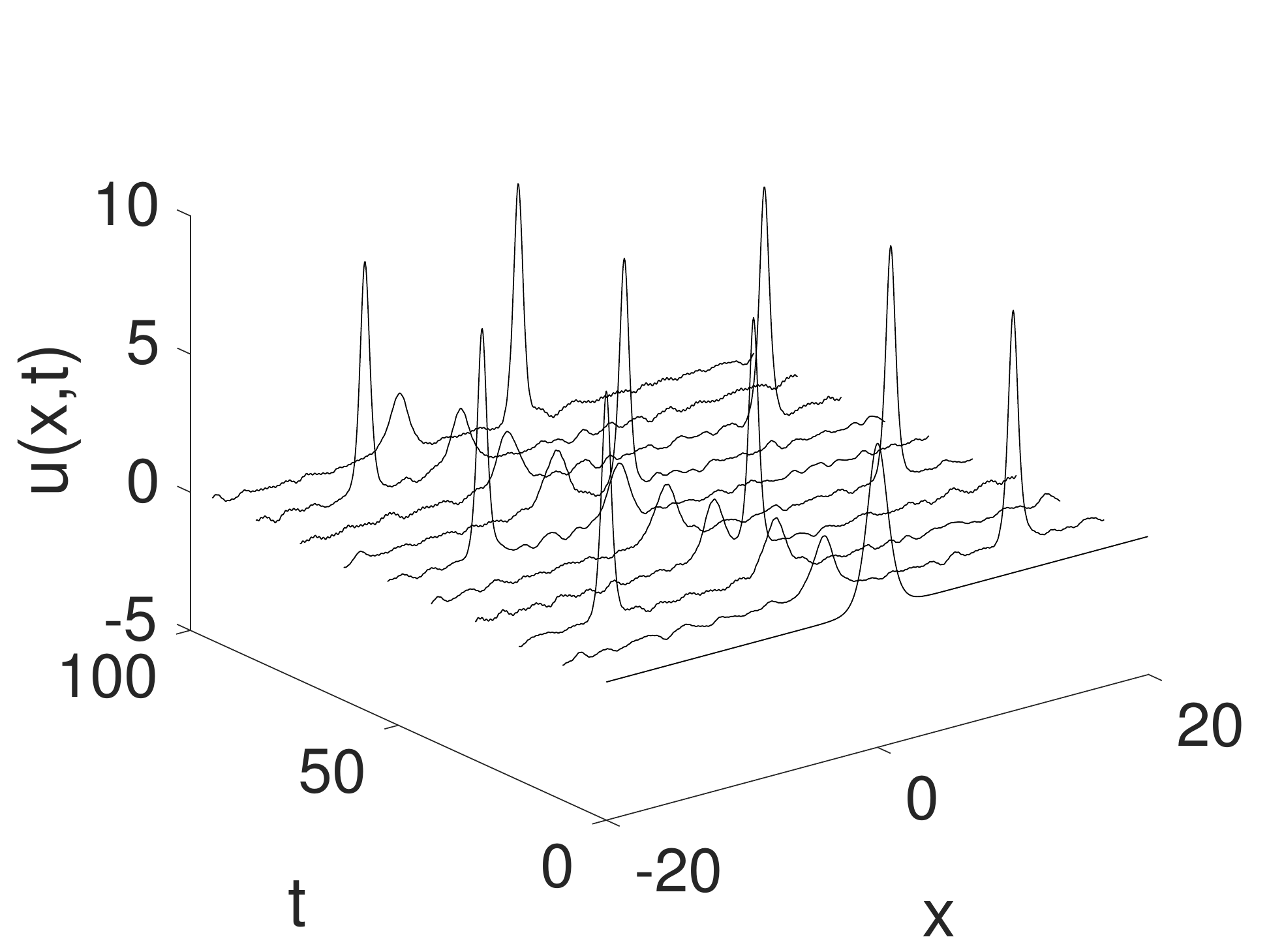}
        \end{subfigure}
        \begin{subfigure}[b]{0.33\textwidth}
        \centering
                \includegraphics[width=0.99\textwidth]{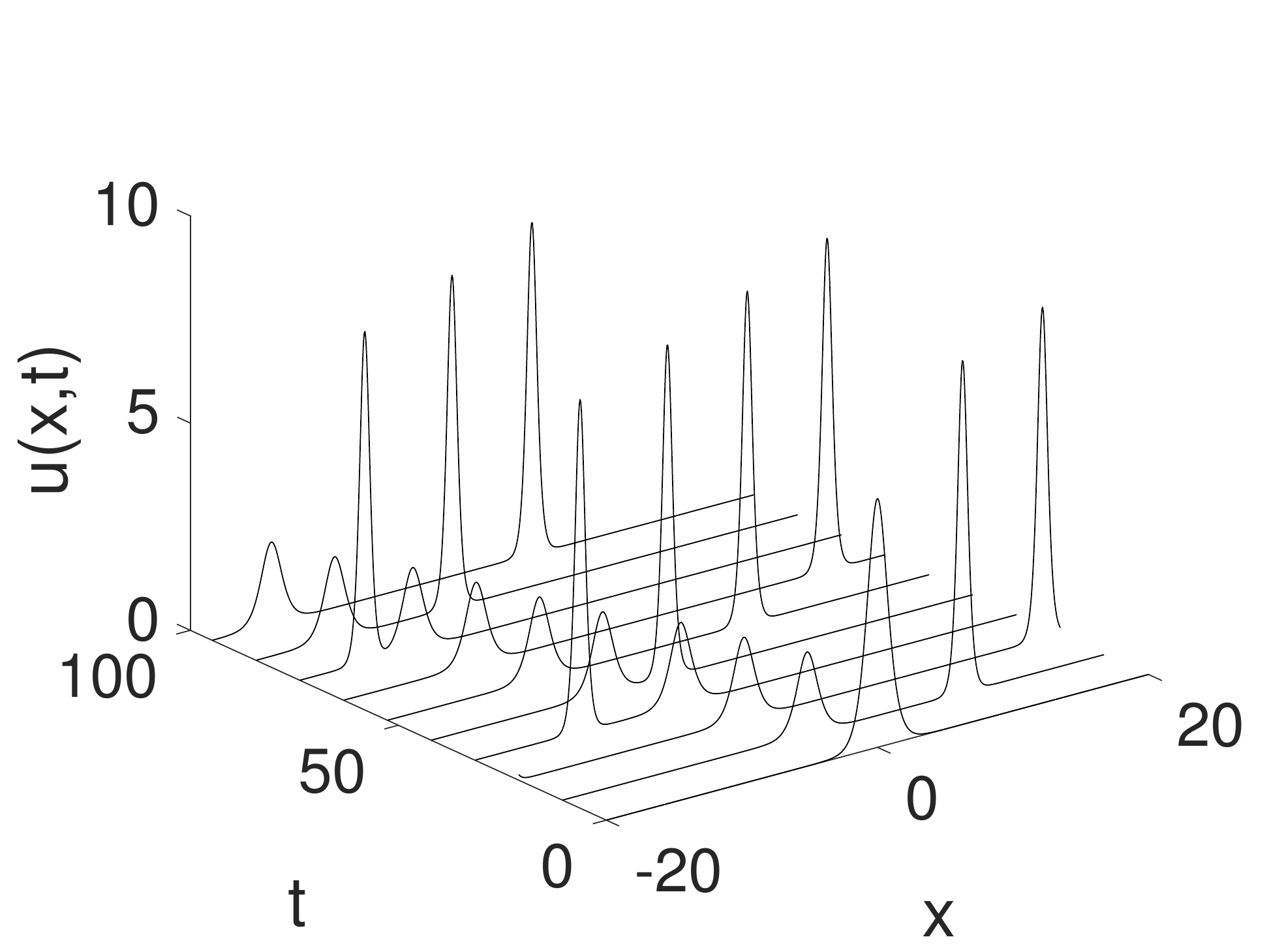}
        \end{subfigure}
 \begin{subfigure}[b]{0.33\textwidth}
      \centering
                \includegraphics[width=0.99\textwidth]{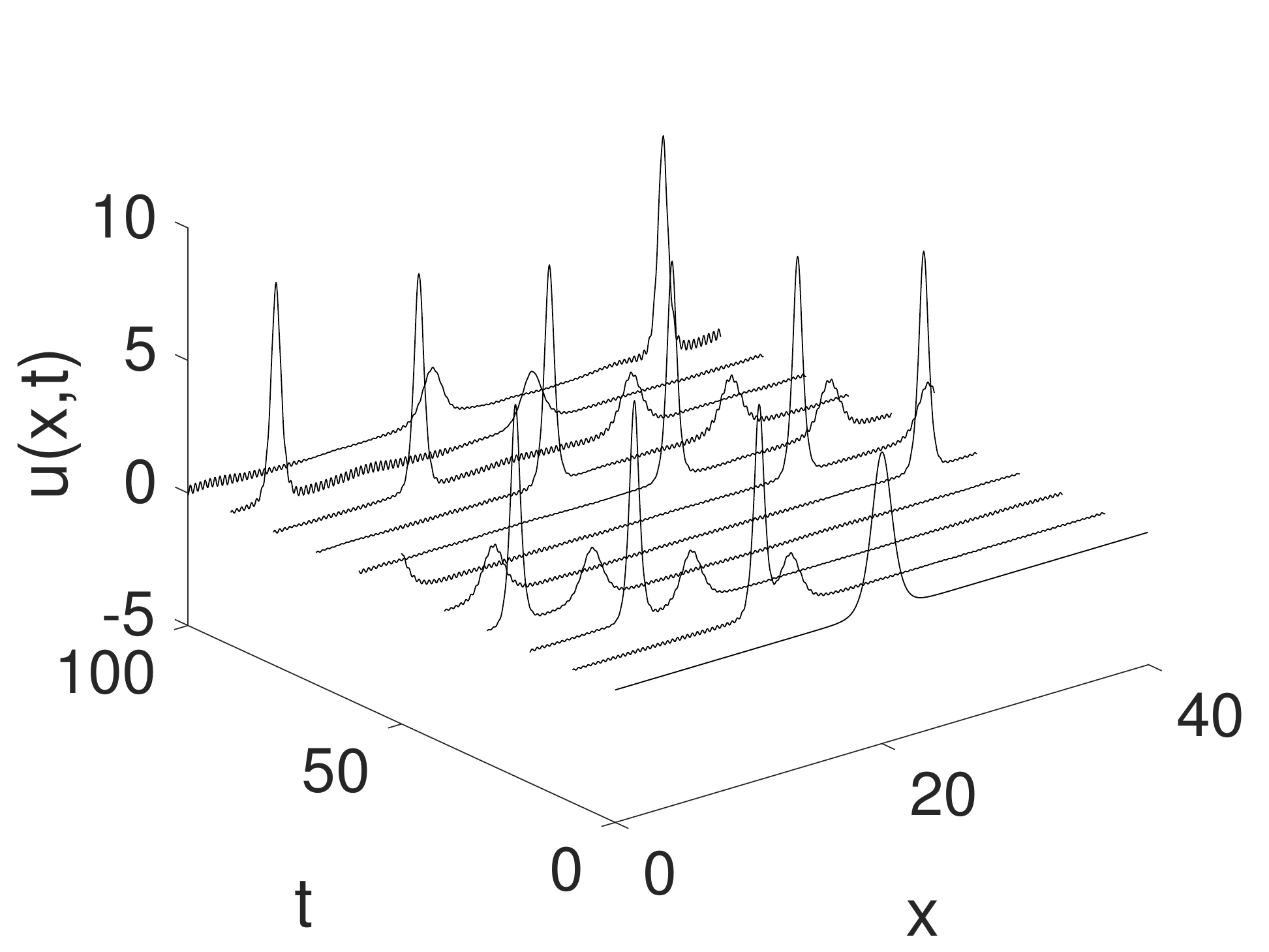}
        \end{subfigure}
      \caption{$\Delta x=0.05$. \textbf{Left:} Propagations of the wave by PDGM, $\Delta t=0.00375$, \textbf{middle:} propagations of the wave by Kahan's method, $\Delta t=0.00375$, \textbf{right:} Propagations of the wave by Kahan's method, $\Delta t=0.0125$.}\label{solution-kdv-2soliton-TKahan-LMI}
\vspace{10pt}
\end{figure}

\subsubsection {Dispersion analysis}
We consider the traditional linear analysis of numerical dispersion relations for the numerical schemes applied to the KdV equation, 
getting the dispersion relation of frequency $\omega$ and wave number $\xi$ to be
\begin{align}\label{eq_kdv_dispersion}
\omega&=\xi^3, \qquad \text{(exact solution)}\\
\textrm{sin}\omega&=\lambda (1-\textrm{cos}\xi)(3\textrm{cos}\omega-1)\textrm{sin}\xi, \qquad \text{(PDGM)}\\
\frac{\textrm{sin}\omega}{1+\textrm{cos}\omega}&=\lambda (1-\textrm{cos}\xi)\textrm{sin}\xi, \qquad \text{(Kahan)}
\end{align}
where $\lambda=\frac{\Delta t}{{\Delta x}^3}$.
The dispersion curve is displayed in Figure \eqref{solution-kdv-1soliton-TKahan-LMI} (right). We observe that Kahan's method is better than PDGM at preserving the exact dispersion relation. This coincides with the behaviour of the methods applied to the nonlinear KdV equation shown in Section \ref{Numericla example kdv}.

\section{Conclusion}
In this paper we perform a comparative study of Kahan's method and what we call the polarised discrete gradient (PDG) method. To that end, we present a general form encompassing a class of two-step methods that includes both a specific case of the PDG method and Kahan's method over two steps. We also compare the methods for two Hamiltonian PDEs: the KdV equation and the Camassa--Holm equation. Both Kahan's method and the PDG method are linearly implicit methods, which will save computational cost. 
A series of numerical experiments has been performed here, for the KdV equation with one and two solitons, and the Camassa--Holm equation with one and two peakons.
These experiments show that Kahan's method is more stable than the PDG method. They also indicate that Kahan's method yields more accurate results, as we have witnessed in the energy error and the shape and phase error when comparing to analytical solutions. Based on our results, we would recommend the use of Kahan's method if one seeks a linearly implicit scheme for a Hamiltonian system with $H$ cubic. 

\section*{Acknowledgements}
The authors would like to thank Elena Celledoni, Takayasu Matsuo and Brynjulf Owren for initiating the work that led to this paper, and for their very helpful comments on the manuscript. The first and second authors acknowledge support from the European Union Horizon 2020 research and innovation programmes, under the Marie Skłodowska-Curie grant agreement no. 691070.

\bibliography{LIM_Jan10}
\bibliographystyle{ieeetr}

\end{document}